\definecolor{DarkBlue}{rgb}{0,0,0.8} 
\definecolor{DarkGreen}{rgb}{0,0.5,0.0} 
\definecolor{DarkRed}{rgb}{0.9,0.0,0.0} 
\newtheorem*{thm*}{Theorem}
\numberwithin{equation}{section}
\newtheorem{lem}[equation]{Lemma}
\newtheorem{cor}[equation]{Corollary}
\newtheorem{prop}[equation]{Proposition}
\theoremstyle{definition}
\newtheorem{ex}[equation]{Example}	
\newtheorem{remark}[equation]{Remark}
\newcommand{\ZZ}{\mathbb{Z}}
\newcommand{\gen}[1]{\langle #1\rangle}
\newcommand{\ophi}{\phi}
\DeclareMathOperator{\tr}{tr}
\newcommand{\K}{\overline{K_4}}
  \definecolor{colore}{cmyk}{0,1,0.6,0}
  \definecolor{coloresimo}{cmyk}{1,0.6,0,0}
  \definecolor{colore}{cmyk}{0,0,0,1}
  \definecolor{coloresimo}{cmyk}{0,0,0,1}
\title{Enumerative Gadget Phenomena for $(4,1)$-Adinkras}
\author{Isaac Friend}
\address{Isaac Friend,
	Dept. of Physics,
	Brown University,
	Providence, RI, USA}
\email{icfriend@uchicago.edu}
\author{Jordan Kostiuk}
\address{Jordan Kostiuk, 
	Dept. of Mathematics,
	Brown University,
	Providence, RI, USA}
\email{jordan\_kostiuk@brown.edu}
\author{Yan X Zhang}
\address{Yan X Zhang,
Dept. of Mathematics and Statistics,
San Jose State University,
San Jose, CA, USA}
\email{yan.x.zhang@math.sjsu.edu}
\begin{document}

\pagestyle{plain}

\begin{abstract}
Adinkras are combinatorial objects developed to study supersymmetry representations. Gates et al. introduced the \emph{gadget} as a function of pairs of adinkras, obtaining some mysterious results for $(n=4, k=1)$ adinkras with computer-aided computation. Specifically, very few values of the gadget actually appear, suggesting a great deal of symmetry in these objects. In this paper, we compute gadgets symbolically and explain some of these observed phenomena with group theory and combinatorics. Guided by this work, we give some suggestions for generalizations of the gadget to other values of the $n$ and $k$ parameters.
\end{abstract}

\maketitle

\section{Introduction}

In this paper, we are motivated by the $(n=4, k=1)$ \emph{adinkra} which has been the principal object of interest in several adinkra papers, such as \cite{calkinsAdinkras0branesHoloraumy2015,gatesLorentzCovariantHoloraumyInduced2015,gates:genomics,gatesjr.AdinkrasOrderedQuartets2017}, with adinkras defined introduced generally in \cite{d2l:first}. 
The $(4, 1)$ adinkras are bipartite graphs with four white vertices (bosons) and four black vertices (fermions), together with extra decorations encoding physical data. 
A $(4,1)$ adinkra encodes the information of a representation of the $(4,1)$ supersymmetry algebra.

Continuing from observations made in \cite{douglas,douglasAutomorphismPropertiesClassification2015}, Gates introduced a map in \cite{gatesLorentzCovariantHoloraumyInduced2015} called the \emph{gadget} which takes as input a pair of adinkras (actually from a special family called \emph{valise adinkras}) and outputs a number. For each adinkra $k \in \{1,2\}$, the procedure creates the adinkra's \emph{(fermionic) holoraumy matrices} $\widetilde{V}_{IJ}$ and $\widetilde{V}'_{IJ}$, each adinkra's set having one matrix for each $I \neq J$ with $I, J \in \{1,2,3,4\}$. Finally, the \emph{gadget} is defined in the special case for $(4,1)$ as 
\[\mathcal{OG}(A,A')= \frac{1}{48}\sum_{I \neq J}\tr(\widetilde{V}_{IJ}\widetilde{V}'_{IJ}).\]

We know (from, e.g., \cite{zhang:adinkras}) that there are $36864$ $(4,1)$ (valise) adinkras. Gates et al. \cite{gatesjr.AdinkrasOrderedQuartets2017} discovered the strange fact that the function $\mathcal{OG}$ has a range of cardinality only $4$ when all $36864^2$ pairs $(A,A')$ of $(4,1)$ adinkras are used as input: $\{1, 1/3, -1/3, 0\}$. Furthermore, there are some nice algebraic properties; for example, if the two adinkras have different \emph{chiralities} (an invariant associated with a $(4,1)$ adinkra), the gadget would evaluate to $0$. 

In our paper, we explain these phenomena with elementary group theory and combinatorics, in a style tailored for future work (e.g. for more general $n$ and $k$). In Section~\ref{sec:prelims}, we cover the background of adinkras, gadgets, and \emph{symmetrized gadgets}---our proposed object to study alongside the gadget. 
In Sections~\ref{sec:4-1} and \ref{sec:gadgets}, we introduce tools to study gadgets, exploiting symmetries particular to $(4,1)$. 
In Section~\ref{sec:yan}, we explain the results of \cite{gatesjr.AdinkrasOrderedQuartets2017} using the introduced machinery. 
In Section~\ref{sec:jordan}, we similarly analyze the symmetrized gadget and highlight its many nice features.
In Section~\ref{sec:gadgets-general}, we give some computational results and thoughts on how these gadgets generalize (or fail to generalize) to higher dimensions.
We end with some discussion in Section~\ref{sec:conclusion}.

\section{Preliminaries and Definitions}
\label{sec:prelims}

In this section, we review the essentials of garden algebras and adinkras, leading up to the definition of \emph{gadgets} in Section~\ref{sec:gadgets}. As this paper mostly pertains to mathematics, we do not focus on the physics background. The interested reader should refer to \cite{d2l:first} for the introduction of adinkras in the physics literature, or \cite{zhang:adinkras} for a more mathematical treatment of adinkras. 

\subsection{Adinkras}

An $(n,k)$ \emph{chromotopology} is a finite connected simple graph $G$ such that:

\begin{itemize}
\item $G$ is $n$-regular (every vertex has exactly $n$ incident edges) and bipartite;
\item Let $V = B \cup F$ be the vertices decomposed into bosons and fermions. We know from \cite{zhang:adinkras}  that $|V|$ is a power of $2$, so $|V| = 2^{n-k}$ determines $|V|$ from $n$ and $k$. 
\item The edges of $G$ are colored by $n$ colors such that every vertex is incident to exactly one edge of each color;
\item We assume the colors come with an ordering; that is, we can label the colors with the integers $1$ through $n$;
\item For any distinct colors $i$ and $j$, the edges in $G$ with colors $i$ and $j$ form a disjoint union of $4$-cycles. 
\end{itemize}

With this structure, a \emph{ranking} of a chromotopology $G$ is a map $h$ from the vertices of $G$ to $\ZZ$ that satisfies certain restraints. 
In this paper, we limit ourselves to the \emph{valise ranking}, which simply means having $h(v) \in \{0,1\}$ for every vertex $v$ and having every edge $(x,y)$ in $G$ satisfy $\{h(x), h(y)\} = \{0,1\}$. 
We visualize this by putting the vertices into two rows, each row corresponding to one of the parts of the bipartition, with the arbitrary choice of fermions $F$ on bottom and bosons $B$ on top.

A \emph{dashing} of a chromotopology $G$ is a map $d$ from the edges of $G$ to $\ZZ_2$ such that the sum of $d(e)$ as $e$ runs over each $2$-colored $4$-cycle (that is, a $4$-cycle of edges using a total of $2$ colors) is $1 \in \ZZ_2$; alternatively, every $2$-colored $4$-cycle contains an odd number of $1$'s. We typically draw a dashed edge for $e$ if $d(e) = 1$ and a solid edge if $d(e) = 0$.

Finally, a \emph{valise adinkra} (although we will just say ``adinkra'' for short as we do not deal with other types of adinkras in this paper) is just a chromotopology with an odd dashing and a valise ranking. See Figure~\ref{fig:initial examples} for an example of an $(n=3, k=0)$ adinkra with $2^{3-0} = 8$ vertices. Whenever we have an adinkra $A$, we denote the underlying chromotopology $C(A)$.

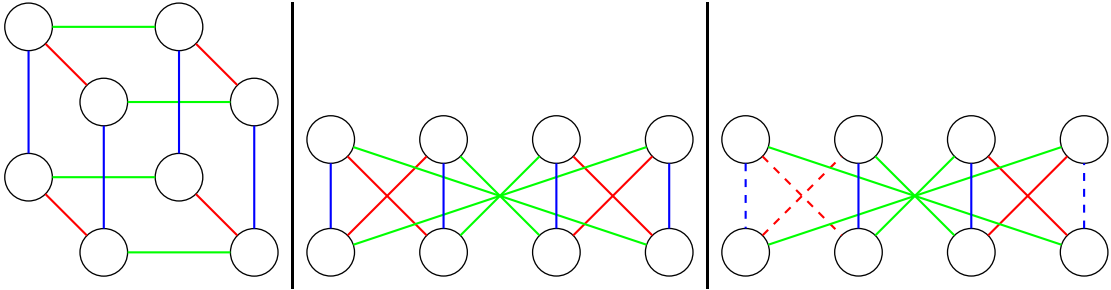
\begin{figure}[htb]
\begin{center}

\begin{tabular}{c|c|c}
\begin{tikzpicture}[scale=0.10]
\SetVertexNoLabel
\SetUpEdge[labelstyle={draw}]
\Vertex[x=0,y=0]{111}
\Vertex[x=20,y=0]{011}
\Vertex[x=0,y=20]{101}
\Vertex[x=20,y=20]{001}
\Vertex[x=-10,y=10]{110}
\Vertex[x=10,y=10]{010}
\Vertex[x=-10,y=30]{100}
\Vertex[x=10,y=30]{000}
\Edge[color=red](100)(101)
\Edge[color=red](000)(001)
\Edge[color=red](010)(011)
\Edge[color=red](110)(111)
\Edge[color=green](000)(100)
\Edge[color=green](001)(101)
\Edge[color=green](010)(110)
\Edge[color=green](011)(111)
\Edge[color=blue](000)(010)
\Edge[color=blue](001)(011)
\Edge[color=blue](100)(110)
\Edge[color=blue](101)(111)
\end{tikzpicture}
&
\begin{tikzpicture}[scale=0.15]
\SetVertexNoLabel
\SetUpEdge[labelstyle={draw}]
\Vertex[x=0,y=20]{111}
\Vertex[x=0,y=10]{101}
\Vertex[x=20,y=20]{010}
\Vertex[x=20,y=10]{000}
\Vertex[x=-10,y=10]{110}
\Vertex[x=10,y=10]{011}
\Vertex[x=-10,y=20]{100}
\Vertex[x=10,y=20]{001}
\Edge[color=red](100)(101)
\Edge[color=red](000)(001)
\Edge[color=red](010)(011)
\Edge[color=red](110)(111)
\Edge[color=green](000)(100)
\Edge[color=green](001)(101)
\Edge[color=green](010)(110)
\Edge[color=green](011)(111)
\Edge[color=blue](000)(010)
\Edge[color=blue](001)(011)
\Edge[color=blue](100)(110)
\Edge[color=blue](101)(111)
\end{tikzpicture}
&
\begin{tikzpicture}[scale=0.15]
\SetVertexNoLabel
\SetUpEdge[labelstyle={draw}]
\Vertex[x=0,y=20]{111}
\Vertex[x=0,y=10]{101}
\Vertex[x=20,y=20]{010}
\Vertex[x=20,y=10]{000}
\Vertex[x=-10,y=10]{110}
\Vertex[x=10,y=10]{011}
\Vertex[x=-10,y=20]{100}
\Vertex[x=10,y=20]{001}
\Edge[color=red,style=dashed](100)(101)
\Edge[color=red](000)(001)
\Edge[color=red](010)(011)
\Edge[color=red,style=dashed](110)(111)
\Edge[color=green](000)(100)
\Edge[color=green](001)(101)
\Edge[color=green](010)(110)
\Edge[color=green](011)(111)
\Edge[color=blue, style=dashed](000)(010)
\Edge[color=blue](001)(011)
\Edge[color=blue, style=dashed](100)(110)
\Edge[color=blue](101)(111)
\end{tikzpicture}
\end{tabular}
\caption{Left to right: a $(n=3, k=0)$ chromotopology (which happens to be the $3$-dimensional Hamming cube), a valise ranked chromotopology, a (valise) adinkra, obtained in sequence by adding more and more structure. \label{fig:initial examples}}
\end{center}
\end{figure}

\subsection{Garden Algebras}

A \emph{permutation matrix} is a square matrix with exactly one $1$ in each row and each column and with $0$'s elsewhere. A \emph{signed permutation matrix} is a permutation matrix, except the $1$'s are also allowed to be $-1$'s. We define a $GR(d,N)$ \emph{garden algebra} to be an algebra generated by $N > 0$ $d \times d$ signed permutation matrices $\{L_1, \ldots, L_N\}$ satisfying the following the relations:
\begin{align*}
  L_I L_J^T + L_J L_I^T & = 2 \delta_{IJ} \mathbbm{1}_d  \\
  L_I^T L_J + L_J^T L_I & = 2 \delta_{IJ} \mathbbm{1}_d,
\end{align*}
where $\mathbbm{1}_d$ denotes the identity $d \times d$ matrix. Given a signed permutation matrix $M$, we use $|M|$ to denote the permutation matrix where all the $-1$'s are changed to $1$'s.

\begin{ex}
\label{ex:matrices}
One possible list $(L_1, L_2, L_3, L_4)$ for a $GR(4,4)$ algebra is:
\[
\begin{bmatrix} 0 & 0 & 1 & 0 \\
0 & 0 & 0 & 1 \\
1 & 0 & 0 & 0 \\
0 & 1 & 0 & 0 \end{bmatrix}, \begin{bmatrix} 0 & -1 & 0 & 0 \\
1 & 0 & 0 & 0 \\
0 & 0 & 0 & -1 \\
0 & 0 & 1 & 0 \end{bmatrix},  \begin{bmatrix} 0 & 0 & 0 & 1 \\
0 & 0 & -1 & 0 \\
0 & -1 & 0 & 0 \\
1 & 0 & 0 & 0 \end{bmatrix}, \begin{bmatrix} 1 & 0 & 0 & 0 \\
0 & 1 & 0 & 0 \\
0 & 0 & -1 & 0 \\
0 & 0 & 0 & -1 \end{bmatrix}.
\]
The list $(|L_1|, |L_2|, |L_3|, |L_4|)$ is
\[
\begin{bmatrix} 0 & 0 & 1 & 0 \\
0 & 0 & 0 & 1 \\
1 & 0 & 0 & 0 \\
0 & 1 & 0 & 0 \end{bmatrix}, \begin{bmatrix} 0 & 1 & 0 & 0 \\
1 & 0 & 0 & 0 \\
0 & 0 & 0 & 1 \\
0 & 0 & 1 & 0 \end{bmatrix}, \begin{bmatrix} 0 & 0 & 0 & 1 \\
0 & 0 & 1 & 0 \\
0 & 1 & 0 & 0 \\
1 & 0 & 0 & 0 \end{bmatrix}, \begin{bmatrix} 1 & 0 & 0 & 0 \\
0 & 1 & 0 & 0 \\
0 & 0 & 1 & 0 \\
0 & 0 & 0 & 1 \end{bmatrix}.
\]
\end{ex}

The main point is that valise adinkras are in bijection with lists of Garden algebra generators (see \cite{zhang:counting} for some nuances in the counting), as the Garden algebra generators are just adjacency matrices for the adinkras.
Indeed, suppose that we label the fermions $f_1, \ldots, f_4$ and bosons $b_1, \ldots, b_4$ and use the colors $\{1,2,3,4\}$. Then, for each color $I$, the corresponding $L$-matrix $L_I$ is the $4\times 4$ matrix whose rows are indexed by the four bosons $B$ and whose columns are indexed by the four fermions $F$, filled in by the following rules:
\begin{itemize}
	\item if boson $b_i$ is adjacent to fermion $f_j$ by the edge of color $I$, then the $(i,j)$ entry of $L_I$ is $\pm 1$; otherwise, set the entry to $0$;
	\item the sign of the $(i,j)$-entry (when non-zero) is determined by the dashing: we have $+1$ if the corresponding edge is solid and $-1$ if the edge is dashed. 
\end{itemize}
Similarly, one can speak of the $R$-matrices $R_I$, which reverse the roles of $B$ and $F$ (e.g. if the fermion $f_i$ is adjacent to boson $b_j$, then the $(i,j)$-th entry is $\pm 1$). This gives the same information, and it is easy to check we have $R_I=L_I^t$. Because of the $2$-colored $4$-cycle rule, these signed permutation matrices satisfy:
\begin{eqnarray*}
	L_IR_J+L_JR_I &=& 2\delta_{IJ}\mathbbm{1}_d\\
	R_IL_J+R_JL_I &=& 2\delta_{IJ}\mathbbm{1}_d,
\end{eqnarray*}
which are the same as the definition of garden algebras earlier.

We will often choose to forget about the signs of garden algebra matrices, obtaining permutation matrices from signed permutation matrices. This has the same effect on a (valise) adinkra $A$ as forgetting about the dashings, which gives the chromotopology $C(A)$.

\subsection{From Adinkras to Linear Operators}
\label{sec:operators}
In this section, we would like to interpret the $L$ and $R$ matrices considered earlier as operators on a vector space. 
Let $V=B\cup F$ denote the bipartition of the vertices and let $\mathbf{C}^V$ denote the $\mathbf{C}$-vector space whose basis elements correspond to the elements of $V$. 
We have a decomposition $\mathbf{C}^V=\mathbf{C}^B\oplus\mathbf{C}^F$. 
Define operators $\ophi_I: \mathbf{C}^V \rightarrow\mathbf{C}^V$ by sending each vertex $v$ to the adjacent vertex along the unique edge of color $I$ incident to $v$, and then multiplying by $\pm 1$ according to the dashing of this edge. 
We adopt the convention that the operator $\phi_I$ acts \emph{on the right}, denoting by $v^{\phi_I}$ the image of $v$ under $\phi_I$.
On $\mathbf{C}^V$, the $\ophi_I$ satisfy relations analogous to the garden-algebra relations: namely,
$$\ophi_I\ophi_J+\ophi_J\ophi_I=2\delta_{IJ},$$
as operators on $\mathbf{C}^V$. 

\begin{lem} 
\label{lem:operator-properties}
Suppose adinkra $A$ has $d = 2^{n-k}$ vertices. We have the following properties for any $I \neq J$.
\begin{enumerate}
  \item $\ophi_I^2 = \mathbbm{1}_{d}$. 
  \item $\ophi_{I}\ophi_J\ophi_I\ophi_J = -\mathbbm{1}_{d}$.
  \item $\ophi_I\ophi_J = -\ophi_J\ophi_I$.
\end{enumerate}
\end{lem}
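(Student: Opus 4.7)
The plan is to deduce all three identities directly from the definition of $\phi_I$ as an operator on $\mathbf{C}^V$. Recall that $\phi_I$ sends each basis vertex $v$ to $\pm w$, where $w$ is the unique color-$I$ neighbor of $v$ and the sign is $(-1)^{d(vw)}$, a quantity symmetric in the endpoints of the edge. All three identities will fall out of walking around small subgraphs and tracking signs.

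For (1), applying $\phi_I$ twice traverses the edge $vw$ once in each direction and picks up the same sign both times, so $v^{\phi_I^2} = (-1)^{2d(vw)} v = v$. Since this holds on every basis vector, $\phi_I^2 = \mathbbm{1}_d$.

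For (2), fix $I \neq J$ and invoke the chromotopology axiom that the edges of colors $I$ and $J$ form a disjoint union of $4$-cycles. Any vertex $v$ lies on a unique such cycle $v - w - x - y - v$, with edges alternately colored $I, J, I, J$. The bipartite structure together with the color alternation prevents any edge from being retraced before the cycle closes, so applying $\phi_I \phi_J \phi_I \phi_J$ walks once around this cycle and returns to $v$, contributing a scalar equal to the product of the four edge signs:
\[
v^{\phi_I \phi_J \phi_I \phi_J} = (-1)^{d(vw) + d(wx) + d(xy) + d(yv)} \, v.
\]
By the odd dashing axiom, the exponent is $1 \pmod 2$, so the scalar is $-1$. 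Since $v$ was arbitrary, this yields the operator identity.

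Identity (3) is then a one-line algebraic consequence of (1) and (2): right-multiplying the identity from (2) by $\phi_J \phi_I$ and collapsing via $\phi_I^2 = \phi_J^2 = \mathbbm{1}_d$ gives $\phi_I \phi_J = -\phi_J \phi_I$. The only step requiring genuine care is (2), and the main obstacle is verifying that the four vertices traversed really form a single closed $4$-cycle rather than a degenerate walk — this is precisely where the $I \neq J$ hypothesis and the bipartite structure enter.
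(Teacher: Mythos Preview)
Your proof is correct and follows essentially the same approach as the paper's own argument: both treat (1) and (2) by tracking the accumulated dashing sign along the appropriate walk (a single edge traversed twice, and a $2$-colored $4$-cycle, respectively), and both deduce (3) algebraically from (1) and (2) by right-multiplying by $\phi_J\phi_I$. Your added remark about why the four vertices in (2) are genuinely distinct is a welcome bit of extra care, but the overall structure is the same.
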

\begin{proof}
  The proofs are as follows:
  \begin{enumerate}
  \item Every time we follow a color and then go back immediately, we traverse either a solid edge twice or a dashed edge twice. Thus, the total number of dashings is even and we are back at the original vertex.
  \item This is the $2$-color $4$-cycle condition of odd dashings; it means every time we follow color $I$, $J$, $I$, and $J$ again, we get to the same vertex with a total of an odd number of dashings, which means we pick up a $-1$ sign.
    \item We obtain this property by multiplying both sides of the second property on the right by $\ophi_J$, then $\ophi_I$. \qedhere
  \end{enumerate}
  \end{proof}

If we fix an ordering of the bosons and fermions (fermions before bosons), we can represent $\phi_I$ as a matrix $M_{\phi_I}$.
We will often identify $\phi_I$ with the matrix $M_{\phi_I}$ if there is no confusion, i.e., a fixed ordering of the vertices has been chosen.
The matrix $\phi_I$ takes the form $\begin{bmatrix} 0 & R_I \\ L_I & 0 \end{bmatrix}$ where $R_I,L_I$ are the signed adjacency matrices introduced earlier with respect the chosen ordering. 
We can check that, indeed,
$$
\ophi_I \ophi_J + \ophi_J \ophi_I = \begin{bmatrix} R_IL_J + R_JI_I & 0 \\ 0 & L_IR_J + L_JR_I \end{bmatrix} = \mathbbm{1}_{2d},
$$
using the fact that the operators $L_*$ and $R_*$ satisfy the garden-algebra relations.

In other words, an adinkra $A$ is completely encoded by the $n$ operators $\ophi_I$ acting on $\mathbf{C}^V$, exchanging the white and black vertices in such a way that the usual garden algebra relations are satisfied. 
Once we fix a basis, i.e., a labelling of these vertices, the $L_I$ and $R_I (= L_I^t)$ matrices appear as the off-diagonal blocks of the matrix representing the $\ophi_I$. 

For each color-pair $(I,J)$, the  \emph{(fermionic) holoraumy matrix} is defined \cite{gatesLorentzCovariantHoloraumyInduced2015} by the rule:
$$\widetilde{V_{IJ}}:=\frac{1}{2i}\left(R_{I}L_{J}-R_{J}L_{I}\right) = -iR_IL_J = iR_JL_I.$$
These are $d\times d$ matrices that map fermions to fermions (hence the name). Similarly, we can define the \emph{bosonic holoraumy matrix} as
$$V_{IJ} = \frac{1}{2i}\left(L_{I}R_{J}-L_{J}R_{I}\right) = -iL_IR_J = iL_JR_I.$$

As shorthand, we write $\ophi_{I_1 I_2 \cdots I_s} = \ophi_{I_1} \cdots \ophi_{I_s}$ (so in particular, $\ophi_{IJ} = \ophi_I \ophi_J$) and refer to $\ophi_{IJ}$ as the \emph{holoraumy operator}. Because the $\ophi_I$'s are adjacency matrices, $v^{\ophi_{I_1 I_2 \cdots I_s}}$ for a vertex $v$ should capture the vertex $w$ obtained from $v$ by moving with colors $I_1, I_2, \ldots, I_s$ in order, retaining the sign (the total number of dashings modulo $2$). Specifically for the holoraumy operator, we have have
$$ \ophi_{IJ} = \ophi_I \ophi_J = \begin{bmatrix}
  0 & R_I \\
  L_I & 0 \end{bmatrix} \begin{bmatrix}
  0 & R_J \\
  L_J & 0 \end{bmatrix} = \begin{bmatrix}
  R_IL_J & 0 \\
  0 & L_IR_J \end{bmatrix}. $$

Then in terms of our $\ophi$'s, we have that $$\widetilde{V}_{IJ} = -iR_IL_J = -i(\ophi_{IJ})|_F,$$
which is just (up to a constant) the restriction of $\ophi_{IJ}$ to the subspace spanned by the fermions (the first $d$ rows and columns). Note that restricting our holoraumy operator to the bosons gives $(\ophi_{IJ})|_B = L_IR_J$, which is equal to $(-i)$ times the bosonic holoraumy matrix.

The matrix $(\ophi_{IJ})_{|_F}$ is a signed permutation matrix that sends each fermion $x$ to the fermion at the end of the path obtained by starting at $x$, travelling the edge of color $I$, and then the edge of color $J$, picking up a sign of $-1$ for every dashed edge. In this paper, these signed permutation matrices without the $i$'s are easier to work with than the holoraumy matrices $\widetilde{V}_{IJ}$, which is why we introduced the holoraumy operator. Thus, we will call them \emph{normalized holoraumy matrices}.

Finally, we denote by $\pi_I = |\ophi_I|$ the unsigned versions of the $\ophi_I$, also seen as endomorphisms on $\mathbf{C}^V$. As with the $\ophi$'s, we use $\pi_{IJ}$ to denote the composition $\pi_I\pi_J$; we have e.g. $\pi_{IJ}=|\ophi_{IJ}|$. 
Then each operator $\ophi_{IJ}$ can be written uniquely as 
$$\ophi_{IJ}=\epsilon_{IJ}\pi_{IJ},$$
where $\epsilon_{IJ}\colon\pm\mathbf{C}^F\to\{\pm 1\}$ is the map that corresponds to the sign that is picked up when travelling the edges of colors $I$ and then $J$; more precisely, $(-1)$ raised to the number of dashes the two edges contain in total. 

\begin{lem}
\label{lem:unsigned-properties}
We have the following properties for any $I \neq J$:
\begin{itemize}
\item $\pi_{IJ}^2 = \mathbbm{1}_V$. That is, $\pi_{IJ}$ is an involution.
\item $\pi_{IJ} = \pi_{JI}.$
\end{itemize}
\end{lem}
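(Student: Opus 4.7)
The plan is to deduce both properties directly from Lemma~\ref{lem:operator-properties}, which records the analogous (signed) statements for $\ophi_{IJ}$, by passing to the underlying permutations.

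The key observation I would set up first is that the operation $M\mapsto |M|$ on signed permutation matrices is multiplicative: $|MN| = |M|\,|N|$. This holds because a signed permutation matrix factors as a diagonal sign matrix times an unsigned permutation matrix, and in a product the signs only affect entries but never which entries are nonzero. Applied to our setting, this gives $\pi_{IJ} = |\ophi_I\ophi_J| = |\ophi_I|\,|\ophi_J| = \pi_I\pi_J$, and more importantly $\pi_{IJ}^2 = |\ophi_{IJ}^2|$ and $|\ophi_J\ophi_I| = \pi_J\pi_I$.

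For the first bullet, I would invoke Lemma~\ref{lem:operator-properties}(2), which says $\ophi_{IJ}^2 = \ophi_I\ophi_J\ophi_I\ophi_J = -\mathbbm{1}_V$. Taking absolute values and using multiplicativity, $\pi_{IJ}^2 = |-\mathbbm{1}_V| = \mathbbm{1}_V$, so $\pi_{IJ}$ is an involution. For the second bullet, I would invoke Lemma~\ref{lem:operator-properties}(3), $\ophi_I\ophi_J = -\ophi_J\ophi_I$, and again take absolute values: $\pi_{IJ} = |\ophi_I\ophi_J| = |\ophi_J\ophi_I| = \pi_{JI}$.

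There is no real obstacle here; the only point requiring care is the multiplicativity of $|\cdot|$, and if desired one can give a one-line combinatorial justification using the description of $\pi_I$ as ``follow the unique edge of color $I$,'' under which $\pi_{IJ}$ becomes the path-following operator for colors $I$ then $J$. In that language, both statements are direct consequences of the $2$-colored $4$-cycle condition: following $I,J,I,J$ returns to the starting vertex (involution), and following $I$ then $J$ from a vertex $v$ lands at the unique vertex diagonally opposite $v$ in its $\{I,J\}$-colored $4$-cycle, which is symmetric in $I$ and $J$.
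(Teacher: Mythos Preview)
Your proof is correct and follows exactly the paper's approach: invoke Lemma~\ref{lem:operator-properties} and then ``forget the signs.'' You add a bit more detail than the paper does (the multiplicativity of $|\cdot|$ and the optional combinatorial reading via $2$-colored $4$-cycles), but the substance is the same.
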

\begin{proof}
  These come immediately from Lemma~\ref{lem:operator-properties} and then literally forgetting the signs.
\end{proof}

\begin{ex}
  \label{ex:example_adinkra}
For a concrete example, see Figure~\ref{fig:example_adinkra} for a $(4,1)$ adinkra. Here,

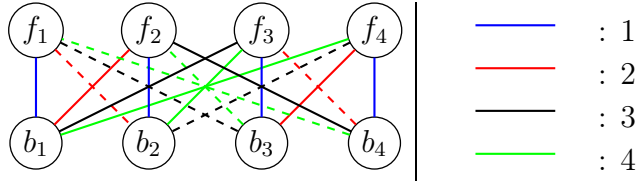
\begin{figure}
  \begin{center}
    \begin{tabular}{c|c}
\begin{tikzpicture}[scale=0.15]
\SetUpEdge[labelstyle={draw}]
\Vertex[L=$b_1$, x=-10,y=-15]{110}
\Vertex[L=$b_2$, x=0,y=-15]{101}
\Vertex[L=$b_3$, x=10,y=-15]{011}
\Vertex[L=$b_4$, x=20,y=-15]{000}

\Vertex[L=$f_1$, x=-10,y=-5]{100}
\Vertex[L=$f_2$, x=0,y=-5]{111}
\Vertex[L=$f_3$, x=10,y=-5]{001}
\Vertex[L=$f_4$, x=20,y=-5]{010}
\Edge[color=red, style=dashed](100)(101)
\Edge[color=red, style=dashed](000)(001)
\Edge[color=red](010)(011)
\Edge[color=red](110)(111)
\Edge[color=green, style=dashed](000)(100)
\Edge[color=green](001)(101)
\Edge[color=green](010)(110)
\Edge[color=green, style=dashed](011)(111)
\Edge[color=blue](000)(010)
\Edge[color=blue](001)(011)
\Edge[color=blue](100)(110)
\Edge[color=blue](101)(111)
\Edge[color=black, style=dashed](100)(011)
\Edge[color=black](000)(111)
\Edge[color=black, style=dashed](010)(101)
\Edge[color=black](110)(001)
\end{tikzpicture} & \begin{tabular}[b]{c} \begin{tikzpicture}[scale=0.15]
\SetVertexNoLabel
\GraphInit[vstyle=Empty]
\SetUpEdge[labelstyle={draw}]
\Vertex[x=0,y=-10]{A}
\Vertex[x=10,y=-10]{B}
\Edge[color=blue](A)(B)
\end{tikzpicture} : 1 \\ \begin{tikzpicture}[scale=0.15]
\SetVertexNoLabel
\GraphInit[vstyle=Empty]
\SetUpEdge[labelstyle={draw}]
\Vertex[x=0,y=-3]{A}
\Vertex[x=10,y=-3]{B}
\Edge[color=red](A)(B)
\end{tikzpicture} : 2 \\ \begin{tikzpicture}[scale=0.15]
\SetVertexNoLabel
\GraphInit[vstyle=Empty]
\SetUpEdge[labelstyle={draw}]
\Vertex[x=0,y=-3]{A}
\Vertex[x=10,y=-3]{B}
\Edge[color=black](A)(B)
\end{tikzpicture} : 3 \\ \begin{tikzpicture}[scale=0.15]
\SetVertexNoLabel
\GraphInit[vstyle=Empty]
\SetUpEdge[labelstyle={draw}]
\Vertex[x=0,y=-3]{A}
\Vertex[x=10,y=-3]{B}
\Edge[color=green](A)(B)
\end{tikzpicture} : 4 \end{tabular}
      \end{tabular}
\caption{An example $(4,1)$ adinkra and its color labels. \label{fig:example_adinkra}}
\end{center}
\end{figure}

$$\ophi_1 = \begin{bmatrix}
  0 & 0 & 0 & 0 & 1 & 0 & 0 & 0 \\
  0 & 0 & 0 & 0 & 0 & 1 & 0 & 0 \\
  0 & 0 & 0 & 0 & 0 & 0 & 1 & 0 \\
  0 & 0 & 0 & 0 & 0 & 0 & 0 & 1 \\
  1 & 0 & 0 & 0 & 0 & 0 & 0 & 0\\
  0 & 1 & 0 & 0 & 0 & 0 & 0 & 0\\
  0 & 0 & 1 & 0 & 0 & 0 & 0 & 0\\
  0 & 0 & 0 & 1 & 0 & 0 & 0 & 0 \end{bmatrix},
\ophi_2 = \begin{bmatrix}
  0 & 0 & 0 & 0 & 0 & -1 & 0 & 0 \\
  0 & 0 & 0 & 0 & 1 & 0 & 0 & 0 \\
  0 & 0 & 0 & 0 & 0 & 0 & 0 & -1 \\
  0 & 0 & 0 & 0 & 0 & 0 & 1 & 0 \\
  0 & 1 & 0 & 0 & 0 & 0 & 0 & 0\\
  -1 & 0 & 0 & 0 & 0 & 0 & 0 & 0\\
  0 & 0 & 0 & 1 & 0 & 0 & 0 & 0\\
  0 & 0 & -1 & 0 & 0 & 0 & 0 & 0 \end{bmatrix},
$$

then we have

$$\ophi_{12} = \begin{bmatrix}
  0 & 1 & 0 & 0 & 0 & 0 & 0 & 0\\
  -1 & 0 & 0 & 0 & 0 & 0 & 0 & 0\\
  0 & 0 & 0 & 1 & 0 & 0 & 0 & 0\\
  0 & 0 & -1 & 0 & 0 & 0 & 0 & 0 \\
  0 & 0 & 0 & 0 & 0 & -1 & 0 & 0 \\
  0 & 0 & 0 & 0 & 1 & 0 & 0 & 0 \\
  0 & 0 & 0 & 0 & 0 & 0 & 0 & -1 \\
  0 & 0 & 0 & 0 & 0 & 0 & 1 & 0 
 \end{bmatrix}, (\ophi_{12})_{|_F} = \begin{bmatrix}
  0 & 1 & 0 & 0 \\
  -1 & 0 & 0 & 0 \\ 
  0 & 0 & 0 & 1 \\
  0 & 0 & -1 & 0 \end{bmatrix}.
$$
Starting at fermion $f_4$, going by color $1$ (blue) then $2$ (red) moves to $b_4$ and then $f_3$, picking up a negative sign because $(f_3, b_f)$ is dashed. Thus, both the $(4,3)$ entry of $\ophi_{12}$ and that of $(\ophi_{12})_{|_F}$ (as that is just the upper-left $4$x$4$ block matrix of $\ophi_{12}$ by definition) equal $-1$, as observed.

\end{ex}


\section{$(4,1)$ adinkras}
\label{sec:4-1}

The discussion so far is about general adinkras. We now focus our attention on the $(4,1)$ adinkras, which have been key players in works such as \cite{calkinsAdinkras0branesHoloraumy2015,gatesLorentzCovariantHoloraumyInduced2015,gates:genomics,gatesjr.AdinkrasOrderedQuartets2017}. These correspond to $GR(4,4)$ garden algebras. These are adinkras (as in Example~\ref{ex:example_adinkra}) where:

\begin{itemize}
\item We have $4$ bosons $B = \{b_1, \ldots, b_4\}$ and $4$ fermions $F = \{f_1, \ldots, f_4\}$.
\item We have $4$ different colors of edges $\{1,2,3,4\}$.
\item The underlying graph is the complete bipartite graph on $B$ and $F$; that is, every pair of a boson and a fermion has a single edge.
\end{itemize}
These adinkras have many specific symmetries that we believe make the ``gadget'' quite special for $(4,1)$. To better understand such adinkras, we begin by considering their chromotopologies, which amounts to ignoring the dashing on the edges (and equivalently for the operators, ignoring the signs). 

\subsection{The Standard Chromotopology}

We introduce a specific $(4,1)$ chromotopology $C^0$ in Figure~\ref{fig:standard}, which we name the \emph{standard chromotopology}. Note that this is the underlying chromotopology of the adinkra in Example~\ref{ex:example_adinkra} as well.


\begin{figure}
\begin{center}
\begin{tabular}{c|c}
\begin{tikzpicture}[scale=0.15]
\SetUpEdge[labelstyle={draw}]
\Vertex[L=$b_1$, x=-10,y=-5]{110}
\Vertex[L=$b_2$, x=0,y=-5]{101}
\Vertex[L=$b_3$, x=10,y=-5]{011}
\Vertex[L=$b_4$, x=20,y=-5]{000}

\Vertex[L=$f_1$, x=-10,y=5]{100}
\Vertex[L=$f_2$, x=0,y=5]{111}
\Vertex[L=$f_3$, x=10,y=5]{001}
\Vertex[L=$f_4$, x=20,y=5]{010}

\Edge[color=blue](000)(010)
\Edge[color=blue](001)(011)
\Edge[color=blue](100)(110)
\Edge[color=blue](101)(111)

\Edge[color=red](100)(101)
\Edge[color=red](000)(001)
\Edge[color=red](010)(011)
\Edge[color=red](110)(111)
\Edge[color=green](000)(100)
\Edge[color=green](001)(101)
\Edge[color=green](010)(110)
\Edge[color=green](011)(111)
\Edge[color=black](000)(111)
\Edge[color=black](001)(110)
\Edge[color=black](100)(011)
\Edge[color=black](101)(010)
\end{tikzpicture} &
\begin{tabular}[b]{c}
\begin{tikzpicture}[scale=0.15]
\SetVertexNoLabel
\GraphInit[vstyle=Empty]
\SetUpEdge[labelstyle={draw}]
\Vertex[x=0,y=-3]{A}
\Vertex[x=10,y=-3]{B}
\Edge[color=blue](A)(B)
\end{tikzpicture} : 1 \\

\begin{tikzpicture}[scale=0.15]
\SetVertexNoLabel
\GraphInit[vstyle=Empty]
\SetUpEdge[labelstyle={draw}]
\Vertex[x=0,y=-3]{A}
\Vertex[x=10,y=-3]{B}
\Edge[color=red](A)(B)
\end{tikzpicture} : 2 \\

\begin{tikzpicture}[scale=0.15]
\SetVertexNoLabel
\GraphInit[vstyle=Empty]
\SetUpEdge[labelstyle={draw}]
\Vertex[x=0,y=-3]{A}
\Vertex[x=10,y=-3]{B}
\Edge[color=black](A)(B)
\end{tikzpicture} : 3 \\

\begin{tikzpicture}[scale=0.15]
\SetVertexNoLabel
\GraphInit[vstyle=Empty]
\SetUpEdge[labelstyle={draw}]
\Vertex[x=0,y=-3]{A}
\Vertex[x=10,y=-3]{B}
\Edge[color=green](A)(B)
\end{tikzpicture} : 4 

\end{tabular}
\end{tabular}
\caption{The standard chromotopology $C^0$ and the labels of colors. \label{fig:standard}}
\end{center}
\end{figure}
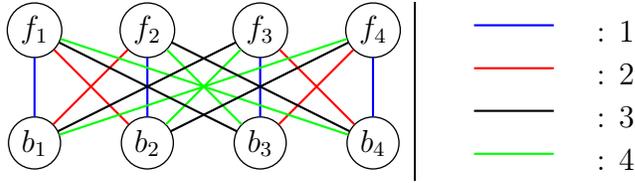

\begin{lem}
\label{lem:S_3}
Given any $(4,1)$ chromotopology $C$, there is a unique $(\rho_B, \rho_F) \in S_4 \times S_4$ such that:
\begin{itemize}
\item $\rho_F(1) = 1$, and
\item Applying $\rho_B$ and $\rho_F$ to the bosons and fermions of $C^0$ respectively gives $C$.
\end{itemize}
\end{lem}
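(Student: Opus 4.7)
The plan is to encode the chromotopology data as permutations of $\{1,2,3,4\}$ and then exploit the natural isomorphism $\mathrm{Stab}_{S_4}(1)\cong\mathrm{Sym}(V_4\setminus\{e\})$, where $V_4\trianglelefteq S_4$ denotes the Klein four-group. For any $(4,1)$ chromotopology $D$, each color $k$ determines a perfect matching between $B$ and $F$, which I record as a permutation $m_k^D\in S_4$ via the vertex indices. The $2$-colored $4$-cycle condition is equivalent to saying that $(m_i^D)^{-1}m_j^D$ is a fixed-point-free involution for all $i\neq j$. Reading off the figure, the standard chromotopology satisfies $\pi_1:=m_1^{C^0}=e$ and $\pi_2,\pi_3,\pi_4=(12)(34),(13)(24),(14)(23)$, i.e., the three non-identity elements of $V_4$.

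Writing $m_k:=m_k^C$, the condition that $(\rho_B,\rho_F)$ transforms $C^0$ into $C$ is equivalent to $m_k=\rho_B\pi_k\rho_F^{-1}$ for every $k$. The case $k=1$ immediately gives $\rho_B=m_1\rho_F$, so it suffices to find $\rho_F$. Substituting and setting $\tau_k:=m_1^{-1}m_k$, the cases $k\in\{2,3,4\}$ collapse to the single conjugation relation $\rho_F\pi_k\rho_F^{-1}=\tau_k$. The chromotopology hypothesis on $C$ forces each $\tau_k$ to be a fixed-point-free involution, and $\tau_i^{-1}\tau_j=m_i^{-1}m_j$ is also a fixed-point-free involution for $i\neq j$, so the $\tau_k$ are pairwise distinct; hence $(\tau_2,\tau_3,\tau_4)$ is a permutation of $V_4\setminus\{e\}$.

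The final step is the group-theoretic observation that conjugation defines a homomorphism $S_4\to\mathrm{Sym}(V_4\setminus\{e\})\cong S_3$ with kernel $V_4$ (since $V_4$ is normal and abelian in $S_4$). Its restriction to $\mathrm{Stab}_{S_4}(1)\cong S_3$ has trivial kernel, because $V_4$ acts freely on $\{1,2,3,4\}$, and the two sides have matching cardinalities, so the restriction is an isomorphism. This isomorphism produces a \emph{unique} $\rho_F\in\mathrm{Stab}_{S_4}(1)$ realizing the desired assignment $\pi_k\mapsto\tau_k$ on $V_4\setminus\{e\}$, after which $\rho_B=m_1\rho_F$ is forced. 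I expect the main conceptual hurdle to be spotting the Klein-four-group structure built into $(4,1)$ chromotopologies; once that is in place, the internal semidirect decomposition $S_4=V_4\rtimes S_3$ does all the real work, and the remaining issues are bookkeeping about which side $\rho_F$ conjugates on.
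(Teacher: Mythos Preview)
Your proof is correct and takes a genuinely different route from the paper's.

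The paper argues constructively: it pins $f_1$ to the upper-left slot, reads $\rho_B$ off the colors of the four edges at $f_1$, then reads $\rho_F$ off the color-$1$ edges from the newly placed bosons, and finally observes that the $2$-colored $4$-cycle rule forces all remaining edge colors. Uniqueness and existence fall out of this explicit placement procedure; no group theory beyond $S_4$ itself is invoked.

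You instead encode the chromotopology as a tuple $(m_1,\dots,m_4)\in S_4^4$, reduce the relabeling condition to $m_k=\rho_B\pi_k\rho_F^{-1}$, and then solve by recognizing that the $\tau_k=m_1^{-1}m_k$ are forced to be a permutation of $V_4\setminus\{e\}$ and that the conjugation map $\mathrm{Stab}_{S_4}(1)\to\mathrm{Sym}(V_4\setminus\{e\})$ is an isomorphism. This is the internal semidirect product $S_4\cong V_4\rtimes S_3$ doing the work, as you say.

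What each buys: the paper's argument is more elementary and gives an explicit algorithm for reading off $(\rho_B,\rho_F)$ from a picture of $C$, which is convenient for the concrete trace computations in Section~\ref{sec:yan}. Your argument is cleaner conceptually and front-loads the Klein-four structure that the paper only develops \emph{after} this lemma (Corollary~\ref{cor:K_4}); it also makes the count $|S_4|\cdot|S_3|=144$ of $(4,1)$ chromotopologies immediate, which the paper obtains separately in Remark~\ref{rem:count}. One small point worth flagging: your formula $m_k=\rho_B\pi_k\rho_F^{-1}$ presumes a specific convention for the direction of $m_k$ and for what ``applying $\rho$'' means; the paper is informal here, so you should state your convention explicitly to avoid an apparent swap of $\rho_B$ and $\rho_F$.
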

\begin{proof}
  In $C$, take $1_F$ and place it on the upper left. It has $4$ edges coming from it to the bosons $B$, which fixes an order on the bosons. Specifically, put left-to-right on the lower level the $4$ bosons connected to $1_F$ via the colors $1,2,3,4$ respectively. Thus, the bosons are in some order $b_{(\rho(1))}, b_{(\rho(2))}, b_{(\rho(3))}, b_{(\rho(4))}$ for some $\rho \in S_4$. Set $\rho_B = \rho$. Now, put above each $b_{(\rho_B(i))}$ the fermion connected to that boson with color $1$. This means the top row is now $(f_1, f_{\rho(2)'}, f_{\rho(3)'}, f_{\rho(4)'})$ for some $\rho' \in S_4$ such that $\rho'(1) = 1$. Set $\rho_F = \rho'$. It is easy to check that the rules of the chromotopology now fix the colors of all the remaining edges (e.g. the edge between $f_{(\rho_B(1))}$, the bottom left boson, and $f_{(\rho_F(2))}$, the second fermion in the top, must be color $2$ because of the $4$-cycle property). In other words, we have drawn a copy of $C^0$ with the top $4$ fermions relabeled and the bottom right $3$ bosons relabeled, with no other choices. 
\end{proof}

Given a chromotopology $C$, we can define $(\rho_B(C), \rho_F(C))$ to be the $(\rho_B,\rho_F)$ obtained via Lemma~\ref{lem:S_3}. When the context of $C$ is clear, we omit the argument and just write $\rho_B$ or $\rho_F$.

\subsection{Chirality}

A very special property of the $(4,1)$ case is that the operators $\ophi_I$ admit another relation that is not obvious from the garden algebra relations.  Namely, we have
$$(\ophi_{1234})|_B=\chi_0(A)\mathbbm{1}_V, (\ophi_{1234})|_F=-\chi_0(A)\mathbbm{1}_V. $$
Equivalently,
$$\ophi_{1234} = \begin{bmatrix} -\chi_0(A)\mathbbm{1}_{4} & 0 \\ 0 & \chi_0(A) \mathbbm{1}_{4}  \end{bmatrix}.$$
The sign $\chi_0(A)$ is called the \emph{chirality} of the adinkra $A$ and partitions the set of $(4,1)$ adinkras into two types, ``cis'' ($\chi_0(A) = 1$) and ``trans'' ($\chi_0(A) = -1$) adinkras, modeled after the cis-trans isomerism from organic chemistry \cite{gates:genomics}. So e.g. our example adinkra in Figure~\ref{fig:example_adinkra} has $\chi_0(A) = 1$, as starting from $b_1$ and moving by the colors blue, red, black, and green in turn picks up $2$ dashes.

We now define a \emph{vertex flip} (as in e.g. \cite{douglas,douglasAutomorphismPropertiesClassification2015}) at $v$ to be the operation that takes a vertex $v \in V$ and changes the dashedness of all edges incident to $v$. It is easy to check that vertex flips do not affect chirality. This operation of vertex flipping acts transitively within each chirality-class. In general, there are $2^k$ such equivalence classes; for a general mathematical treatment and proofs relating to these equivalence classes, see the discussion of ``vertex switching classes'' in \cite{zhang:adinkras}.

\begin{lem} 
\label{lem:signed-properties}
For any $\tau \in S_4$, $\ophi_\tau = (-1)^{s(\tau)} \begin{bmatrix} -\chi_0(A)\mathbbm{1}_{4} & 0 \\ 0 & \chi_0(A) \mathbbm{1}_{4}  \end{bmatrix},$ where $s(\tau)$ is the sign of the permutation $\tau$.
\end{lem}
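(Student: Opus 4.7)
The plan is to reduce the statement to the special case $\tau = \mathrm{id}$ (which is exactly the chirality identity stated just before the lemma) by exploiting the anticommutation relation for the operators $\ophi_I$.

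The key ingredient is part (3) of Lemma~\ref{lem:operator-properties}, which tells us that $\ophi_I \ophi_J = -\ophi_J \ophi_I$ whenever $I \neq J$. Interpreting $\ophi_\tau$ as the product $\ophi_{\tau(1)}\ophi_{\tau(2)}\ophi_{\tau(3)}\ophi_{\tau(4)}$, each swap of two adjacent factors in this word multiplies the product by $-1$, since the four colors $1,2,3,4$ are distinct and each adjacent pair falls under the anticommutation rule.

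Concretely, I would write any $\tau \in S_4$ as a product $\tau = s_{i_1} s_{i_2} \cdots s_{i_\ell}$ of adjacent transpositions $s_i = (i,\, i+1)$. Applying each $s_{i_j}$ in turn to the word $\ophi_{\tau(1)}\ophi_{\tau(2)}\ophi_{\tau(3)}\ophi_{\tau(4)}$ swaps two neighboring $\ophi$-operators with distinct indices, producing a single factor of $-1$. After $\ell$ such swaps we reach $\ophi_1\ophi_2\ophi_3\ophi_4 = \ophi_{1234}$, and so
$$\ophi_\tau = (-1)^\ell\, \ophi_{1234} = (-1)^{s(\tau)}\, \ophi_{1234},$$
using the fact that the parity of the number of adjacent transpositions in any such factorization equals $s(\tau)$. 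Substituting the block-diagonal expression for $\ophi_{1234}$ from the definition of chirality yields the claim.

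There is no genuine obstacle: the lemma is essentially a bookkeeping consequence of anticommutativity together with the prior identification of $\ophi_{1234}$ with the chirality matrix. The only thing to be careful about is that all four colors $1,2,3,4$ appear exactly once in the word $\ophi_\tau$, so that every adjacent swap involves distinct indices and Lemma~\ref{lem:operator-properties}(3) applies at each step.
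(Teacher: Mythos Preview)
Your proposal is correct and essentially identical to the paper's own proof: both start from the chirality identity for $\tau = 1234$, reach an arbitrary $\tau$ by adjacent transpositions, and invoke Lemma~\ref{lem:operator-properties}(3) to pick up a $-1$ at each swap. Your write-up is slightly more explicit about why each swap involves distinct indices, but the argument is the same.
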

\begin{proof}
  For $\tau = 1234$, the identity in $S_4$, this is true by definition of chirality. Then, we can get to any other permutation transitively by adjacent transpositions of $2$ indices. Each transposition changes the sign of the permutation by $-1$, but also changes $\ophi_{\tau}$ by $-1$ as we know from Lemma~\ref{lem:operator-properties} that $\ophi_{I}\ophi_J = -\ophi_J \ophi_I$. Thus, the property is true for all $\tau$.
  \end{proof}

We say that $\{I, J\}$ and $\{K, L\}$ are \emph{complementary} if $\{I,J,K,L\} = \{1,2,3,4\}$. We immediately obtain:
\begin{cor}
  \label{cor:signed-properties}
  The following hold for an $(4,1)$ adinkra $A$ and any $I \neq J$:
  \begin{enumerate}
  \item If $\{K,L\}$ is complementary to $\{I,J\}$, $\ophi_{IJ} \ophi_{KL} = \ophi_{KL}\ophi_{IJ}$.
  \item $\ophi_{1234} = (-\ophi_{1324}) = \ophi_{1423} = \begin{bmatrix} -\chi_0(A)\mathbbm{1}_{4} & 0 \\ 0 & \chi_0(A) \mathbbm{1}_{4}  \end{bmatrix}.$
  \item $\ophi_{12} = \begin{bmatrix} \chi_0(A)(\ophi_{34})_{|_F} & 0 \\ 0 & -\chi_0(A) (\ophi_{34})_{|_B}\end{bmatrix}.$
  \item $\ophi_{13} = \begin{bmatrix} -\chi_0(A)(\ophi_{24})_{|_F} & 0 \\ 0 & \chi_0(A) (\ophi_{24})_{|_B}\end{bmatrix}.$
  \item $\ophi_{14} = \begin{bmatrix} \chi_0(A)(\ophi_{23})_{|_F} & 0 \\ 0 & -\chi_0(A) (\ophi_{23})_{|_B}\end{bmatrix}.$
  \end{enumerate}
\end{cor}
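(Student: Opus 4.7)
The plan is to derive all five statements from Lemma~\ref{lem:signed-properties} together with the block structure and involution properties of the $\ophi_I$'s already established in Section~\ref{sec:operators}.

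For part~(2), I would just compute the parities of the three permutations in one-line notation: $1234$ is the identity (even), $1324$ is the single transposition $(2\,3)$ (odd), and $1423$ equals the $3$-cycle $(2\,4\,3)$ (even). Lemma~\ref{lem:signed-properties} then yields the three equalities immediately, with the common value being the block matrix displayed just before that lemma.

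For part~(1), observe that $\ophi_{IJ}\ophi_{KL} = \ophi_{IJKL}$ and $\ophi_{KL}\ophi_{IJ} = \ophi_{KLIJ}$ are instances of $\ophi_\tau$ for the permutations of $\{1,2,3,4\}$ with one-line notations $(I,J,K,L)$ and $(K,L,I,J)$. These differ by $(1\,3)(2\,4)$, a product of two disjoint transpositions and therefore even; by Lemma~\ref{lem:signed-properties} the two $\ophi_\tau$'s carry the same sign and so coincide, which is exactly the claimed commutation.

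For parts~(3)--(5), the key structural fact is that each $\ophi_I$ is block antidiagonal (it swaps $B$ and $F$), so $\ophi_{IJ}$ is block diagonal of the form $\mathrm{diag}((\ophi_{IJ})_{|_F},(\ophi_{IJ})_{|_B})$, and similarly for $\ophi_{KL}$ when $\{K,L\}$ is complementary to $\{I,J\}$. Combining this with part~(2), the product $\ophi_{IJ}\ophi_{KL}$ is block-diagonally equal to $\pm\,\mathrm{diag}(-\chi_0\mathbbm{1}_4,\chi_0\mathbbm{1}_4)$, with the overall sign read off from the parity of $(I,J,K,L)$ just computed (plus for $\{12,34\}$ and $\{14,23\}$, minus for $\{13,24\}$). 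Finally, Lemma~\ref{lem:operator-properties}(2) gives $(\ophi_{KL})^2 = -\mathbbm{1}$, hence $(\ophi_{KL})^{-1} = -\ophi_{KL}$ block by block; right-multiplying the block identity by $-\ophi_{KL}$ solves for $\ophi_{IJ}$ in the stated form.

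The main obstacle is purely sign bookkeeping: tracking the parity sign from Lemma~\ref{lem:signed-properties}, the $\pm\chi_0$ distinction between the fermion and boson blocks of the chirality matrix, and the extra $-1$ coming from $(\ophi_{KL})^{-1} = -\ophi_{KL}$. Once these three sources of signs are reconciled in each of the three cases, the formulas follow mechanically with no new ideas beyond the two lemmas already in hand.
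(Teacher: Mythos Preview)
Your proposal is correct and follows essentially the same route as the paper's proof: part~(2) by reading off parities in Lemma~\ref{lem:signed-properties}, part~(1) by the same parity/swap reasoning, and parts~(3)--(5) by isolating $\ophi_{IJ}$ from the chirality identity. The only cosmetic difference is that the paper right-multiplies $\ophi_{1234}$ by $\ophi_{43}$ (using $\ophi_4^2=\ophi_3^2=\mathbbm{1}$ from Lemma~\ref{lem:operator-properties}(1)) while you right-multiply by $(\ophi_{KL})^{-1}=-\ophi_{KL}$ (using Lemma~\ref{lem:operator-properties}(2)); since $\ophi_{LK}=-\ophi_{KL}$ these are the same maneuver.
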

\begin{proof}   The first item comes from the fact that $\ophi_{IJ}\ophi_{KL} = \ophi_I \ophi_J \ophi_K \ophi_L$ and it takes $6$,  (an even number) ``swaps'' of the form $\ophi_X \ophi_Y = -\ophi_Y \ophi_X$ to get from one expression to the other. The second item is immediate from Lemma~\ref{lem:signed-properties}. To get the third item, start with $\ophi_{1234} = \begin{bmatrix} -\chi_0(A)\mathbbm{1}_{4} & 0 \\ 0 & \chi_0(A) \mathbbm{1}_{4}  \end{bmatrix}$ and multiply both sides on the right by $\phi_{43}$. The remaining items are analogous to the third item.
\end{proof}

\begin{lem} 
\label{lem:unsigned-properties-4-1}
We have the following properties for any $I \neq J$, where $\{K,L\}$ is complementary to $\{I,J\}$:
\begin{enumerate}
\item $\pi_{IJ} = \pi_{KL}$. 
\item $\pi_{IJ} \pi_{IK} = \pi_{JK} = \pi_{IL}$.
\end{enumerate}
\end{lem}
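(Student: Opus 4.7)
The plan is to derive both parts from the signed statements already available in Lemma~\ref{lem:signed-properties} and Corollary~\ref{cor:signed-properties} by passing to absolute values. The one arithmetic fact needed is that for signed permutation matrices $A$ and $B$, we have $|AB| = |A| \cdot |B|$: since each row of $A$ has exactly one nonzero entry, the $(i,j)$-entry of $AB$ is a single product $A_{i,k_0} B_{k_0,j}$, and taking absolute values distributes across this single product. In particular $\pi_{IJ} = |\ophi_I\ophi_J| = \pi_I\pi_J$, so the $\pi$-operators multiply just like words in the $\pi_I$.

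For part (1), first observe that the $\pi_I$ pairwise commute: this is immediate from Lemma~\ref{lem:unsigned-properties}, since $\pi_I\pi_J = \pi_{IJ} = \pi_{JI} = \pi_J\pi_I$. Then, using Corollary~\ref{cor:signed-properties}(2), for any complementary pair $\{K,L\}$ of $\{I,J\}$ we have $\ophi_{IJ}\ophi_{KL} = \pm\ophi_{1234} = \pm\left[\begin{smallmatrix} -\chi_0(A)\mathbbm{1}_4 & 0 \\ 0 & \chi_0(A)\mathbbm{1}_4 \end{smallmatrix}\right]$, so applying the absolute-value observation gives
\[
\pi_{IJ}\pi_{KL} \;=\; |\ophi_{IJ}\ophi_{KL}| \;=\; \mathbbm{1}_8.
\]
Combined with the fact that $\pi_{IJ}$ is an involution (Lemma~\ref{lem:unsigned-properties}), this yields $\pi_{KL} = \pi_{IJ}^{-1} = \pi_{IJ}$.

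For part (2), having established pairwise commutativity, the calculation reduces to
\[
\pi_{IJ}\pi_{IK} \;=\; \pi_I\pi_J\pi_I\pi_K \;=\; \pi_I^2\pi_J\pi_K \;=\; \pi_{JK},
\]
using $\pi_I^2 = \mathbbm{1}$ (from Lemma~\ref{lem:operator-properties}(1), whose proof only uses the edge-parity argument and therefore survives forgetting signs). The remaining equality $\pi_{JK} = \pi_{IL}$ is just part (1) applied to the complementary pair $\{J,K\},\{I,L\}$.

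I expect no serious obstacle; the only step meriting care is the remark that $|AB| = |A|\cdot|B|$ for signed permutation matrices, which I would either prove in one line or absorb into the proof of Lemma~\ref{lem:unsigned-properties} earlier. Everything else is direct bookkeeping from the already-established signed identities.
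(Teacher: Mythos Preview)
Your proof is correct. For part (1) you take essentially the same route as the paper: both arguments derive the unsigned identity from Corollary~\ref{cor:signed-properties} by forgetting signs, though the paper invokes items (3)--(5) directly while you use item (2) (equivalently Lemma~\ref{lem:signed-properties}) together with the involution property.

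Where you genuinely diverge is part (2). The paper does not give an algebraic argument at all; it simply asserts that it suffices to check the identity by hand on the standard chromotopology $C^0$ and omits the details. Your approach is cleaner: you observe that the single-color operators $\pi_I$ pairwise commute (from $\pi_{IJ}=\pi_{JI}$) and satisfy $\pi_I^2=\mathbbm{1}$, and then the identity $\pi_{IJ}\pi_{IK}=\pi_{JK}$ follows by a two-line rearrangement, with $\pi_{JK}=\pi_{IL}$ falling out of part (1). This buys you a uniform, coordinate-free argument that requires no case-checking on a particular chromotopology, and it makes transparent \emph{why} the $K_4$ structure appears: the $\pi_I$ generate an abelian group in which every element squares to the identity. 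The paper's approach, by contrast, is more concrete but less illuminating and leaves the verification to the reader. Your one-line justification of $|AB|=|A|\cdot|B|$ for signed permutation matrices is the only extra ingredient, and it is routine.
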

\begin{proof}
  The first item is immediate from using Corollary~\ref{cor:signed-properties} and then forgetting the signs. For example, the third item of Corollary~\ref{cor:signed-properties} becomes $$\pi_{12} = |\ophi_{12}| = \begin{bmatrix} (\pi_{34})_{|_F} & 0 \\ 0 &  (\pi_{34})_{|_B}\end{bmatrix} = \pi_{34}.$$
  For the second item, it suffices to check by hand on the standard chromotopology $C^0$, as relabeling the bosons and fermions do not affect these relationships. We omit the technical details.
\end{proof}

We know from Lemma~\ref{lem:unsigned-properties} that $\pi_{IJ}^2 = \mathbbm{1}_V$. Combining this with Lemma~\ref{lem:unsigned-properties-4-1}, we see that we have just checked the group multiplication table for $K_4$, the \emph{Klein-$4$ group}! This gives us the following result:
\begin{cor}
  \label{cor:K_4}
  We have the following properties:
  \begin{itemize}
  \item The identity matrix $\mathbbm{1}_8$ and the three elements $\pi_{12} = \pi_{34}$, $\pi_{13} = \pi_{24}$, $\pi_{14} = \pi_{23}$ form a group isomorphic to $K_4$.
    \item Restricting to the fermions, $\{\mathbbm{1}_4, (\pi_{12})|_F, (\pi_{13})|_{F}, (\pi_{14})|_F\}$ form a group isomorphic to $K_4$. The same holds when we restrict to the bosons.
  \end{itemize}
\end{cor}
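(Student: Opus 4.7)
The plan is to verify directly that the four operators satisfy the defining multiplication table of $K_4$, and then to check that each $\pi_{IJ}$ preserves the bipartition $V = B \cup F$ so that the restrictions inherit the group structure.

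First, Lemma~\ref{lem:unsigned-properties-4-1}(1) already collapses the six pairs $\pi_{IJ}$ (for $1 \leq I < J \leq 4$) into at most three operators $\pi_{12} = \pi_{34}$, $\pi_{13} = \pi_{24}$, $\pi_{14} = \pi_{23}$. Together with $\mathbbm{1}_V$, this gives at most four elements. Each non-identity element is an involution by Lemma~\ref{lem:unsigned-properties}, and the product of any two distinct non-identity elements yields the third: for instance, Lemma~\ref{lem:unsigned-properties-4-1}(2) gives $\pi_{12}\pi_{13} = \pi_{23} = \pi_{14}$, and analogous relations hold for the other pairs. This is exactly the multiplication table of $K_4$.

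What remains is to confirm that the four elements are genuinely distinct, so that we obtain a group of order $4$ rather than a proper quotient. I would argue that each $\pi_{IJ}$ is fixed-point-free on $V$ (hence unequal to the identity): if $v$ is any vertex, $\pi_I(v)$ is its unique neighbor via color $I$, and the color-$J$ edge out of that neighbor is a different edge, so by simplicity of the graph $\pi_{IJ}(v) \neq v$. Pairwise distinctness of the three nontrivial operators then follows from the multiplication table: if $\pi_{12} = \pi_{13}$, for example, then $\pi_{14} = \pi_{12}\pi_{13} = \pi_{12}^2 = \mathbbm{1}_V$, contradicting fixed-point-freeness.

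Finally, every $\pi_I$ swaps $B$ and $F$ because edges of a single color connect bosons to fermions (bipartiteness), so each $\pi_{IJ}$ preserves $B$ and $F$ separately. Hence restriction to $F$ (respectively $B$) defines a group homomorphism from the full-space $K_4$, and the same fixed-point-free argument shows that the restricted operators remain distinct, yielding isomorphic copies of $K_4$ on each half. The only real obstacle here is the distinctness check; the multiplication table itself is essentially immediate from the two preceding lemmas, exactly as the excerpt already observes.
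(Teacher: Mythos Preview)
Your proof is correct and follows the same approach as the paper, which simply observes (in the sentence preceding the corollary) that Lemmas~\ref{lem:unsigned-properties} and~\ref{lem:unsigned-properties-4-1} together verify the $K_4$ multiplication table. You go further than the paper by explicitly checking distinctness via the fixed-point-free argument and by spelling out why the restriction to $F$ (or $B$) inherits the group structure; the paper leaves both points implicit.
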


We denote by $\K:=K_4 - \{1\}$ the set of the $3$ non-identity elements of $K_4$. We now know that given an $(4,1)$ adinkra $A$, the $6$ normalized holoraumy matrices $(\ophi_{IJ})_{|_F}$ (for $I < J$) come in $3$ pairs, one for each element of $\K$; this is because when we forget the signs, the $(\ophi_{IJ})_{|_F}$'s become $(\pi_{IJ})|_{|_F}$'s, which have the $K_4$ structure described in Corollary~\ref{cor:K_4}.

\begin{remark}
\label{rem:count}
Let us count, in our terms, the total number of possible adinkra representations. 
Ignoring the odd dashing for now, there are exactly $|S_4|$ choices of bijections from the bosons $B$ to the fermions $F$, which means that there are $|S_4|$ choices for the operator $\pi_1$. 
Once the operator $\pi_1$ is chosen, the operators $\pi_1\pi_2,\pi_1\pi_3,\pi_1\pi_4$ restricted to $B$ must then correspond to the elements of $\K$, as discussed above. 
Once a bijection of $\pi_1\pi_2,\pi_1\pi_3,\pi_1\pi_4$ with $\K$ is chosen, the permutations $\pi_i$ for $i=2,3,4$ are completely determined. 
Putting this all together, there are $|S_3|\cdot|S_4|$ ways of choosing the permutation operators $\pi_i$. 
Finally, there are $2^8$ odd dashings one can place on the underlying graph (\cite{gatesjr.AdinkrasOrderedQuartets2017} for the $(4,1)$ case and \cite{zhang:adinkras} for general theory), from which we obtain a total of 
\[2^8\cdot 24\cdot 6=36864\]
distinct adinkras. 

While there have been several other versions of this count, this perspective allows us to see that we may \textbf{fix} a choice of ordered bases for $V = B \cup F$ and cycle through all of the adinkras by varying the choices of operators $\pi_I$.
\end{remark}

In particular, $\rho_B$ (from Lemma~\ref{lem:S_3}) is some permutation in $S_4$ that permutes $\{2,3,4\}$ and keeps $1$ fixed. There are $6$ such permutations (which we write in cycle form):
\begin{itemize}
\item the identity permutation $()$ of cycle type $(1)(1)(1)$;
\item the swaps $(23)$, $(24)$, and $(34)$ of cycle type $(2)(1)$;
\item the cycles $(234)$ and $(432)$ of cycle type $(3)$.
\end{itemize}
In addition to the $6$ ways of choosing $\rho_B$, there are $|S_4| = 24$ ways to choose $\rho_F$, and each pair of choices gives a different chromotopology. In light of Remark~\ref{rem:count}, this accounts for all $24*6=144$ $(4,1)$ chromotopologies.

The upshot of Lemma~\ref{lem:S_3} is that it allows us to compute $\pi_{IJ}$ for an arbitrary chromotopology $C$:
\begin{lem}
\label{lem:standard}
Suppose an adinkra $A$ has chromotopology $C(A)$. Then 
\[
\pi_{IJ} = (\rho_F)(IJ)(KL)(\rho_F^{-1}).
\]
\end{lem}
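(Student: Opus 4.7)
The plan is to first establish the formula on the standard chromotopology $C^0$ directly, and then transfer it to an arbitrary $C$ via the relabeling supplied by Lemma~\ref{lem:S_3}.

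First I would read off from Figure~\ref{fig:standard} that in $C^0$ the color-$I$ edge incident to $f_i$ connects to $b_{\sigma_I(i)}$, where $\sigma_1 = e$, $\sigma_2 = (12)(34)$, $\sigma_3 = (13)(24)$, and $\sigma_4 = (14)(23)$; these are precisely the elements of the Klein-$4$ subgroup of $S_4$, as one would expect from Corollary~\ref{cor:K_4}. Because each $\sigma_I$ is an involution, the color-$J$ edge incident to $b_j$ likewise goes to $f_{\sigma_J(j)}$. Hence on $C^0$, $\pi_{IJ}|_F$ sends $f_i \mapsto b_{\sigma_I(i)} \mapsto f_{(\sigma_J \sigma_I)(i)}$, and a direct check inside the Klein-$4$ group gives $\sigma_J \sigma_I = (IJ)(KL)$ for $\{K,L\}$ complementary to $\{I,J\}$. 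This proves the claim when $C = C^0$, where $\rho_F$ is the identity.

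Next, for a general chromotopology $C$, Lemma~\ref{lem:S_3} describes $C$ as the image of $C^0$ after relabeling fermions via $\rho_F$ and bosons via $\rho_B$. Propagating this relabeling through the adjacency description above, in $C$ the color-$I$ edge at $f_k$ lands at $b_{(\rho_B \sigma_I \rho_F^{-1})(k)}$, while the color-$J$ edge at $b_m$ lands at $f_{(\rho_F \sigma_J \rho_B^{-1})(m)}$. Composing,
\[
\pi_{IJ}|_F(f_k) \;=\; f_{(\rho_F \sigma_J \rho_B^{-1})(\rho_B \sigma_I \rho_F^{-1})(k)} \;=\; f_{(\rho_F \sigma_J \sigma_I \rho_F^{-1})(k)},
\]
the inner $\rho_B$'s cancelling. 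Substituting $\sigma_J \sigma_I = (IJ)(KL)$ yields $\pi_{IJ} = \rho_F\, (IJ)(KL)\, \rho_F^{-1}$, as desired.

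The principal obstacle is just bookkeeping the composition orders and inverses carefully when passing from $C^0$ to $C$. The pleasant feature — that the answer depends only on $\rho_F$ and not on $\rho_B$ — is not visible at the outset but falls out automatically from the involutivity of the $\sigma_I$'s, which lets the boson-side relabeling telescope away in the middle of the composition.
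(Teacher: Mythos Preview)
Your proof is correct and follows essentially the same approach as the paper: first verify $\pi_{IJ} = (IJ)(KL)$ on the standard chromotopology $C^0$, then transfer to an arbitrary $C$ via the relabeling of Lemma~\ref{lem:S_3}, yielding the conjugation by $\rho_F$. Your version is more explicit in one respect worth noting: you track both $\rho_B$ and $\rho_F$ through the computation and show concretely that the $\rho_B$'s telescope away in the middle, whereas the paper's proof silently works only with fermion labels and never addresses why $\rho_B$ is irrelevant.
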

\begin{proof}
We chose the colors of $C^0$ so that in $C^0$, $\pi_{IJ} = \pi_{KL} = (IJ)(KL)$, where $I,J$ and $K,L$ are complementary; the last expression is for expressing a permutation in cycle form. By Lemma~\ref{lem:S_3}, $C$ is obtained from $C^0$ by $\rho_F(C)$ applied to the non-$1_F$ bosons of $C^0$ and some $\rho_B$ applied to the bosons of $C^0$. Therefore, following colors $J$ and then $I$ from any vertex $i_F$ means first applying $\rho^{-1}(i)$ to find the corresponding vertex label in $C^0$, following  $\pi_{IJ}$ in $C^0$ (which gives $(IJ)(KL)$), and then finding the correct label in $C(A)$ by applying $\rho$.
\end{proof}

This Lemma also makes clear which pair of operators corresponds to each element of $\K$. For example, if $\rho_F(C(A)) = (234)$, then we know
\[
\pi_{12} = (234)(12)(34)(432) = (13)(24),
\]
so we know $\widetilde{V}_{12}$ and $\widetilde{V}_{3,4}$ correspond to $(13)(24)$, as their underlying unsigned permutation matrices do.

\subsection{Degrees of Freedom}

Given a $(4,1)$ adinkra $A$ with garden algebra generators $\{L_i\}$, consider the $4 \times 4$ matrix 
\[
E = L_1 + L_2 + L_3 + L_4,
\]
with rows indexed by the bosons and columns indexed by fermions. This gives us a matrix wherein every element is $1$ or $-1$, because the underlying graph of $A$ is the complete bipartite graph $K_{4,4}$. We call this the \emph{edge matrix} of $A$ because each entry corresponds to one of the edges.

\begin{lem}
\label{lem:4-1-dashings-freedom}
Given an $(4,1)$ chromotopology $C$, a chirality $\chi_0$, a color $c$, and a vertex $v$, there are $2^{7}$ choices to (freely) select the dashings of the $4$ $c$-colored edges and the $3$ other edges incident to $v$. Each such choice can be completed in a unique way to an odd dashing of $C$ (thus creating a valise $(4,1)$ adinkra).
\end{lem}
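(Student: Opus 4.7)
The plan is to set up a bijection between the $2^7$ partial dashings on the $7$ specified edges (the $4$ edges of color $c$ together with the $3$ non-$c$-colored edges incident to $v$) and the odd dashings of $C$ with chirality $\chi_0$. By Remark~\ref{rem:count}, $C$ admits $2^8$ odd dashings in total. The vertex-flip action, where a subset $S\subseteq V$ flips each edge $\{u,w\}$ with $|S\cap\{u,w\}|$ odd, preserves chirality and acts transitively on each chirality class. Because only $S\in\{\emptyset,V\}$ fails to separate any edge of the connected graph $K_{4,4}$, this action is faithful modulo $S\leftrightarrow V\setminus S$; hence each chirality class has cardinality $2^8/2=2^7$. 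It therefore suffices to prove the restriction map from odd dashings of chirality $\chi_0$ to partial dashings is injective.

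Suppose odd dashings $A$ and $A'$ of chirality $\chi_0$ agree on all $7$ specified edges. By transitivity of vertex flips within the chirality class, there is a subset $S\subseteq V$ (unique up to complementation) such that $A'$ is obtained from $A$ by flipping the vertices in $S$. Agreement on each of the $7$ edges means $S$ does not separate its endpoints.

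The $4$ edges of color $c$ form a perfect matching of $V$ (since $C$ is $(4,1)$-regular), so the $c$-conditions force $S$ to be a union of $c$-matched pairs. The $3$ remaining specified edges together with $v$'s $c$-edge span all $4$ edges incident to $v$, so $S$ either contains $v$ together with all $4$ of its neighbors or contains none of them. If $v\in S$, then all $4$ neighbors of $v$ — which comprise the entire opposite side of the bipartition — lie in $S$; pairing each of those neighbors with its $c$-partner via the matching then pushes all vertices on $v$'s side into $S$, giving $S=V$. The case $v\notin S$ is symmetric, giving $S=\emptyset$. In either case the flip is trivial, so $A'=A$.

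The main obstacle is recognizing the right rigid structure — the $c$-matching paired with the star at $v$ — that propagates uniquely to the remaining $9$ edges; once the vertex-flip framework is in place, the propagation itself is a short two-step chase: $v\to$ its $4$ neighbors $\to$ their $c$-matched partners.
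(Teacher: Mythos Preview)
Your proof is correct, and it takes a genuinely different route from the paper's own argument. The paper proceeds constructively: fixing $C=C^0$, $c=1$, $v=b_1$, it writes down the edge matrix $E$, freely parametrizes the seven specified entries, and then uses specific $2$-colored $4$-cycles together with three explicit chirality paths to solve for each of the remaining nine entries one by one (obtaining formulas such as $E_{2,3}=sxy$, $E_{2,4}=-sxz$, $E_{3,4}=-syz$), finally checking that the result really is an odd dashing. Your argument instead counts: you invoke the transitivity of vertex flips on each chirality class and the triviality of the stabilizer (kernel $\{\emptyset,V\}$, by connectedness) to see that each class has size $2^7$, and then prove injectivity of the restriction map by a short propagation using the $c$-matching and the star at $v$. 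Since the codomain also has $2^7$ elements, bijectivity follows. The paper's approach has the advantage of producing the explicit parametrization that is later used verbatim in the gadget computations of Sections~\ref{sec:yan}; your approach is cleaner, avoids path-chasing and the final consistency check, and makes transparent why exactly seven edges suffice (they touch every vertex once the $c$-matching and the star at $v$ are combined).
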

\begin{proof}
We prove this case when all the edges of color $c=1$ are solid and $v = b_1$. All the other cases follow from symmetry. Consider the edge matrix $E$ of $A$. Without loss of generality, assume $C = C^0$, so the edges of color $1$ are exactly the $E_{i,i}$. We freely select a sign for each. Let these choices be $e_i = E_{i,i}$ for all $i \in \{1,2,3,4\}$.

We know that for all $i \neq j$, following $E_{i,i}, E_{i,j}, E_{j,i}, E_{i,j}$ gives a $2$-colored $4$-cycle, so the product of the $4$ signs must be $(-1)$; however, we already selected values for $E_{i,i} = e_i$ and $E_{j,j} = e_j$, so we get $E_{i,j} = - e_i e_j E_{j,i}$ for all $i \neq j$.

Now, we also freely parametrize $x = E_{1,2}, y = E_{1,3}, z = E_{1,4}$. By now, we have made choices of the $7$ dashings described by the Lemma. It remains to show that each of these $2^7$ choices uniquely fixes all the remaining signs.

Recall that having chirality $\chi_0(A)$ means that starting with any boson, following colors $1$,$2$,$3$,$4$ in order gives total sign $\chi_0(A)$. Let $s = \chi_0(A)$. One such path is 
\[
b_1 \rightarrow_1 f_1 \rightarrow_2 b_2 \rightarrow_3 f_4 \rightarrow_4 b_1,
\] 
where we use subscripts of the arrow to denote color. Multiplying the dashings, we obtain $(e_1)(-e_1e_2x)(E_{24})(z) = s$, so $E_{2,4} = -se_2xz$.

Doing the same for the path 
\[
b_2 \rightarrow_1 f_2 \rightarrow_2 b_1 \rightarrow_3 f_3 \rightarrow_4 b_2
\] 
gives the product $(e_2)(x)(y)(E_{2,3}) = s$, so $E_{2,3} = se_2xy$.

Finally, following 
\[
b_3 \rightarrow_1 f_3 \rightarrow_2 b_4 \rightarrow_3 f_2 \rightarrow_4 b_3
\]
gives the product $(e_3)(-e_3e_4E_{3,4})(-e_2e_4E_{2,4})(-e_2e_3E_{2,3}) = s$, so $E_{3,4} = -se_3E_{2,3}E_{2,4} = -se_3yz$.

We can check that these values $E_{i,j}$ form a legitimate dashing by looking at all the remaining $4$-cycles. 
\end{proof}

As one last sanity check, the above proof claims there are $2*2^7 = 2^8$ (the first term for chirality and the second term for the $7$ edges) dashings for a particular chromotopology, which is what we expect from the general dashing counting formula $2^{2^{n-k}-k+1} = 2^{2^3-1+1}$ found in \cite{zhang:adinkras}.

\section{Our Two Gadgets}
\label{sec:gadgets}

Fix a pair of $(4,1)$ adinkras $A$ and $A'$, and use $\ophi_I, \ophi_{IJ}, \widetilde{V}_{IJ}$ to denote the matrices coming from $A$, and $\ophi_I', \ophi_{IJ}', \widetilde{V}'_{IJ}$ for the matrices coming from $A'$. 
Then Gates \cite{gatesLorentzCovariantHoloraumyInduced2015}  defines the \emph{gadget} as 
\[\mathcal{OG}(A,A'):=\frac{1}{48}\sum_{I \neq J}\tr(\widetilde{V}_{IJ} \widetilde{V}'_{IJ}).\]
We can rewrite this as
\[ -\frac{1}{48}\sum_{I \neq J}\tr_{|_F}\left(\ophi_{IJ} \ophi_{IJ}'\right) = -\frac{1}{24}\sum_{I < J} \tr_{|_F}(\ophi_{IJ}\ophi_{IJ}'), \]
where in the last line we use the following symmetry: for any term where $J < I$, because $\phi_{IJ} = -\phi_{JI}$ we know that $\ophi_{JI}\ophi_{JI}' = \ophi_{IJ}\ophi_{IJ}'$. Instead of summing over all $12$ cases, we can just sum over $6$ the cases where $I < J$. 

\begin{remark}
  \label{rem:naturality}
  One issue with the product $\widetilde{V}_{IJ}\widetilde{V}'_{IJ}$ is that does not admit an upfront interpretation as a composition of operators: the holoraumy matrices are $4\times 4$ matrices representing operators with respect to \textbf{different} spaces. Our discussion in Section~\ref{sec:4-1} identifies these bases. For each vertex set $V$, picking a fixed set of vertex names $\{b_1, \ldots, b_4, f_1, \ldots, f_4\}$ induces an isomorphism $\mathbf{C}^V\cong\mathbf{C}^8$. Each $\ophi_I$ induces an operator on $\mathbf{C}^V$ (being identified with the $\mathbf{C}$-span of the vertices in each adinkra), which we also denote by $\ophi_I$ to avoid more cumbersome notation. Given adinkras $A$ and $A'$, the composition $\ophi_I\ophi'_J$ is now the composition of operators on the \textbf{same} space $\mathbf{C}^V$. Similarly, the normalized holoraumy matrices are now endomorphisms on the same $4$-dimensional subspace $\mathbf{C}^F$ (corresponding to fermions) of said $\mathbf{C}^V$. 
  \end{remark}


 
Interpreting the normalized holoraumy matrices as operators on the same vector space, as in Remark~\ref{rem:naturality}, motivates us to introduce a new gadget that appears naturally from linear algebra. Given two endomorphisms $X$ and $Y$ on the same vector space, there is a natural non-degenerate symmetric bilinear form on endomorphisms:
\[\gen{X,Y}:=\tr(X\cdot Y).\]
Guided by this, to each adinkra $A$ we associate the operator
\[X_A:= \sum_{I < J} (\ophi_{IJ})|_F\]
on $\mathbf{C}^F$ and define the \emph{symmetrized gadget} as follows:
\[\mathcal{SG}(A,A'):=\gen{X_A,X_{A'}}=\sum_{I_1<J_1,I_2<J_2}\tr_{|_F}(\ophi_{I_1J_1}\ophi'_{I_2J_2}).\]

As $\mathcal{OG}(A,A')$ equals, up to a constant, $\sum_{I<J}\tr_{|_F}(\ophi_{IJ}\ophi'_{IJ})$, the symmetrized gadget can be thought of as being obtained from (a constant multiple of) the original gadget by adding additional terms corresponding the other products coming from the very natural object $\tr(X_A \cdot X_A')$. 

\begin{remark}
We can also think of the operator $X_A$ with signed adjacency matrices. Specifically, consider the graph with vertex set equal to $F$, the fermions. For each pair of vertices $x$ and $y$, sum up the length-$2$ paths going from $x$ to $y$ via colors $I$ and $J$ in order in the adinkra for each $I<J$, allowing negative signs for each dash. Then, $X_A$ is the signed weighted adjacency matrix for this graph.	It should be noted that since this graph has multiple edges (in fact, each pair of vertices is joined by exactly $2$ two edges) and since the two edges may have different signs, it is possible for two vertices to be adjacent in this graph even though the corresponding entry in the matrix for $X_A$ equals $0$.
\end{remark}

It turns out that the two objects enjoy similar properties. Recall from Section~\ref{sec:operators} that we can think of an operator $\ophi_{IJ}$ as a product $\epsilon_{IJ}\pi_{IJ}$, where the first term is the signs and the second term is a permutation matrix. Formally,
\begin{eqnarray*}
	\ophi_{I_1J_1}\ophi'_{I_2,J_2}&=&\epsilon_{I_1J_1}\pi_{I_1J_1}\epsilon'_{I_2J_2}\pi'_{I_2J_2}\\
		&=&\epsilon_{I_1J_1}\pi_{I_1J_1}\epsilon'_{I_2J_2}{\pi'}_{I_1J_1}^{-1}\pi_{I_1J_1}\pi_{I_2J_2}\\
		&=&\nu_{I_1J_1,I_2J_2}\pi_{I_1J_1}\pi'_{I_2J_2},
\end{eqnarray*}
where 
$$\nu_{I_1J_1,I_2J_2}:=\epsilon_{I_1J_1}\pi_{I_1J_1}\epsilon'_{I_2J_2}{\pi'}_{I_1J_1}^{-1}.$$
Note that $\nu_{I_1J_1,I_2J_2}\colon\mathbf{C}^F\to\{\pm 1\}$ and can be thought of as a tally of the signs picked up as we travel along the edges of color $I_1$ and then $J_1$ in $A$, followed by edges of color $I_2$ and $J_2$ in $A'$ (it is useful to think of $A$ and $A'$ ``glued'' to each other along the fermions $F$). This allows us to write the gadgets as follows: 
\begin{align*}
  \mathcal{G}(A,A') & = \sum_{I,J}\tr_{|_F}(\nu_{IJ,IJ}\pi_{IJ}\pi'_{IJ}) \\
  \mathcal{SG}(A,A')& = \sum_{I_1<J_1,I_2<J_2}\tr_{|_F}(\nu_{I_1J_1,I_2J_2}\pi_{I_1J_1}\pi'_{I_2J_2}).
\end{align*}
Since the $\nu$-operators are diagonal, our strategy will be to study the fixed-points of the permutation operators $\pi_{I_1J_1}\pi'_{I_2J_2}$. In the following $2$ sections, we execute this strategy.

\section{Values of the Gadget}
\label{sec:yan}

Table~\ref{table:computation} lists the frequencies found in \cite{gatesjr.AdinkrasOrderedQuartets2017} via computer for the gadget values for all the $36864^2$ possible ordered pairs of adinkras. In this section, we confirm these results ``by hand'' via algebra and also derive structural properties that allow us to understand, completely, how these numbers arose.

\begin{table}[ht]
\caption{Computations from \cite{gatesjr.AdinkrasOrderedQuartets2017}.}
\begin{center}
\begin{tabular}{|c|c|}
\hline
  value of $\mathcal{OG}(A,A')$ & number of pairs $(A,A')$ \\
\hline
$1$ & $14,155,776$ \\   
$1/3$ & $84,934,656$ \\  
$-1/3$ & $127,401,984$ \\ 
$0$ & $1,132,462,080$ \\
\hline
\end{tabular}
\end{center}
\label{table:computation}
\end{table}

Our approach starts with the following lemma:

\begin{lem}
\label{lem:cover}
Suppose adinkras $A$ and $A'$ have corresponding operators $\ophi_*$ and $\ophi'_*$. We have, for any distinct $I < J$ in $\{1,2,3,4\}$, 
\[
\ophi_{IJ}\ophi'_{IJ} = \chi_0(A)\chi_0(A') \ophi_{KL}\ophi'_{KL},
\]
where $K < L$ are complementary to $\{I, J\}$.

\end{lem}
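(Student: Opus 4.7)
The plan is to reduce the identity to the chirality relation of Lemma~\ref{lem:signed-properties}. First I would note that $\ophi_{IJ}\ophi_{KL} = \ophi_\tau$ for the permutation $\tau = (I,J,K,L)$, so Lemma~\ref{lem:signed-properties} yields
\[
\ophi_{IJ}\ophi_{KL} = c\,\chi_0(A)\,Q, \qquad Q := \begin{bmatrix} -\mathbbm{1}_4 & 0 \\ 0 & \mathbbm{1}_4 \end{bmatrix},
\]
where $c = (-1)^{s(\tau)}$ is a sign depending only on the combinatorial choice of ordering of the complementary pair $\{K,L\}$, not on the adinkra. The same identity, with the \emph{same} $c$, holds for $A'$.

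Next I would solve for $\ophi_{IJ}$. By Lemma~\ref{lem:operator-properties}(2), $\ophi_{KL}^2 = -\mathbbm{1}_8$, so $\ophi_{KL}^{-1} = -\ophi_{KL}$. Multiplying the previous identity on the right by $\ophi_{KL}^{-1}$ gives $\ophi_{IJ} = -c\,\chi_0(A)\,Q\,\ophi_{KL}$, and similarly $\ophi'_{IJ} = -c\,\chi_0(A')\,Q\,\ophi'_{KL}$. Multiplying these two expressions produces
\[
\ophi_{IJ}\ophi'_{IJ} \;=\; c^2\,\chi_0(A)\chi_0(A')\,\bigl(Q\,\ophi_{KL}\,Q\,\ophi'_{KL}\bigr).
\]

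To finish, I would invoke the block-diagonal shape of the holoraumy operators: as recorded in Section~\ref{sec:operators}, $\ophi_{KL}$ and $\ophi'_{KL}$ each have the form $\mathrm{diag}(R_K L_L,\, L_K R_L)$, so both commute with the block-diagonal matrix $Q$. Using $Q^2 = \mathbbm{1}_8$ and $c^2 = 1$ collapses the parenthesized factor to $\ophi_{KL}\ophi'_{KL}$, giving exactly the desired identity.

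I don't expect a real obstacle here; the only bookkeeping worth watching is that the sign $c$ truly is adinkra-independent, but this is transparent from Lemma~\ref{lem:signed-properties} since $s(\tau)$ is a property of the underlying permutation alone. The conceptual content is simply that each of $\chi_0(A)$ and $\chi_0(A')$ appears once and combines into their product, while every other sign squares away.
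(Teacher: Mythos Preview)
Your proof is correct and follows essentially the same approach as the paper's: both express $\ophi_{IJ}$ as $\pm\chi_0(A)$ times a block-diagonal matrix applied to $\ophi_{KL}$ (the paper does this in explicit block form for $I=1,J=2$, you do it via $Q$ and Lemma~\ref{lem:signed-properties} for general $I,J$), then multiply the two adinkras' expressions so that the adinkra-independent sign squares away and the block structure collapses via $Q^2=\mathbbm{1}_8$. The only difference is packaging---you handle all cases at once, while the paper argues one case and declares the others similar.
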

\begin{proof}
We do the proof for $I = 1$ and $J=2$; the other situations are similar.

We know that $$\ophi_1\ophi_2\ophi_3\ophi_4 = \begin{bmatrix} -\chi_0(A)\mathbbm{1}_{4} & 0 \\ 0 & \chi_0(A) \mathbbm{1}_{4}  \end{bmatrix}, \ophi_3\ophi_4 = \begin{bmatrix}(\ophi_{34})|_F & 0 \\ 0 & (\ophi_{34})|_B\end{bmatrix}, (\phi_3 \phi_4)^2 = - \mathbbm{1}_8.$$ Thus, $$\ophi_1\ophi_2 = -\begin{bmatrix} -\chi_0(A)\mathbbm{1}_{4} & 0 \\ 0 & \chi_0(A) \mathbbm{1}_{4}  \end{bmatrix} \begin{bmatrix}(\ophi_{34})|_F & 0 \\ 0 & (\ophi_{34})|_B\end{bmatrix} = \begin{bmatrix} \chi_0(A)(\ophi_{34})|_F & 0 \\ 0 & -\chi_0(A) (\ophi_{34})|_B \end{bmatrix}.$$ By same reasoning on $A'$, we obtain
$$\ophi'_1\ophi'_2 = \begin{bmatrix} \chi_0(A')(\ophi'_{34})|_F & 0 \\ 0 & -\chi_0(A') (\ophi'_{34})|_B \end{bmatrix}.$$
Multiplying, we obtain
$$\ophi_{12}\ophi'_{12} = \chi_0(A)\chi_0(A') \ophi_{34}\ophi'_{34},$$
as desired. For other cases, the analysis is basically the same.
\end{proof}

We now manipulate $\mathcal{OG}$. Recall that 
\[ \mathcal{OG}(A,A') = -\frac{1}{48}\sum_{I \neq J} \tr_{|_F}((\ophi_{IJ}\ophi_{IJ}') =  -\frac{1}{24}\sum_{I < J} \tr_{|_F}(\ophi_{IJ}\ophi_{IJ}').\]
Let $g_{IJ} = (\ophi_{IJ}\ophi'_{IJ})_{|_F}$. Then,
\begin{cor}
\label{cor:twofold}
For any $(4,1)$ adinkras $A$ and $A'$, we have
\[
\mathcal{OG}(A,A')= -\frac{1}{24} [\tr(g_{12}) + \tr(g_{13}) + \tr(g_{14})](1 + \chi_0(A)\chi_0(A')).
\]
In particular, if $A$ and $A'$ have different chirality, then $\mathcal{OG}(A,A') = 0$.
\end{cor}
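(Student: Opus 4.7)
The plan is to unfold the formula $\mathcal{OG}(A,A') = -\frac{1}{24}\sum_{I<J}\tr_{|_F}(\ophi_{IJ}\ophi'_{IJ})$ into its six summands, grouped by complementary pairs, and then apply Lemma~\ref{lem:cover} termwise.

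First I would partition the index set $\{(I,J) : I<J\}$ into the three complementary pairs of pairs:
\[
\{(1,2),(3,4)\},\quad \{(1,3),(2,4)\},\quad \{(1,4),(2,3)\}.
\]
For each such pairing $\{(I,J),(K,L)\}$, Lemma~\ref{lem:cover} gives the operator identity $\ophi_{IJ}\ophi'_{IJ} = \chi_0(A)\chi_0(A')\,\ophi_{KL}\ophi'_{KL}$ on $\mathbf{C}^V$. Restricting to the fermion subspace $\mathbf{C}^F$ (which is preserved by these holoraumy operators, as observed in Section~\ref{sec:operators}), we obtain
\[
g_{IJ} = \chi_0(A)\chi_0(A')\, g_{KL},
\]
and taking traces yields $\tr(g_{IJ}) + \tr(g_{KL}) = \bigl(1 + \chi_0(A)\chi_0(A')\bigr)\tr(g_{IJ})$.

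Summing these three identities gives
\[
\sum_{I<J}\tr(g_{IJ}) = \bigl[\tr(g_{12}) + \tr(g_{13}) + \tr(g_{14})\bigr]\bigl(1 + \chi_0(A)\chi_0(A')\bigr),
\]
and dividing by $-24$ yields the claimed formula. The chirality consequence is then immediate: if $\chi_0(A) \neq \chi_0(A')$, then $\chi_0(A)\chi_0(A') = -1$, so the factor $(1 + \chi_0(A)\chi_0(A'))$ vanishes and $\mathcal{OG}(A,A') = 0$.

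There is no real obstacle here; the proof is essentially a bookkeeping exercise once Lemma~\ref{lem:cover} is in hand. The only care needed is to confirm that restricting to $F$ commutes with taking traces in each summand, which is automatic since each $\ophi_{IJ}\ophi'_{IJ}$ is block-diagonal with respect to the $B \oplus F$ decomposition (as noted after Lemma~\ref{lem:operator-properties}).
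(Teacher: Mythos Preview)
Your proof is correct and follows essentially the same approach as the paper's: pair off the six summands by complementary colors, invoke Lemma~\ref{lem:cover} on each pair, and collect coefficients. The paper's proof is simply a terser version of what you wrote.
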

\begin{proof}
There are $6$ cases where $I < J$. Because we have e.g. $\ophi_{34}\ophi_{34}' = \chi_0(A)\chi_0(A')\ophi_{12}\ophi_{12}'$ by Lemma~\ref{lem:cover}, the $6$ cases come in $3$ such pairs. Collecting coefficients in each pair gives the result.
\end{proof}

\begin{remark}
Lemma~\ref{lem:cover} and Corollary~\ref{cor:twofold} show that our gadget $\mathcal{G}(A,A')$ in some sense a ``2-cover'' of the Klein-$4$ group; this is a very specific behavior for $(4,1)$. This suggests that we may want a different definition of the \emph{gadget} for higher $n$ and $k$, or at least expect that the ``obvious'' generalizations are not as nice.
\end{remark}
Our strategy, then, is to compute 
\[
g(A,A'): = \tr(g_{12}) + \tr(g_{13}) + \tr(g_{14})
\] 
when $A$ and $A'$ have matching chirality. In that situation, $(1 + \chi_0(A)\chi_0(A')) = 2$, and $\mathcal{OG}(A,A') = -\frac{g(A,A')}{12}.$ Otherwise, $\mathcal{OG}(A,A')$ must be $0$ as $(1 + \chi_0(A)\chi_0(A')) = 0$. We now aim to compute $\tr(g_{IJ})$. Let $\rho = \rho_F(C(A))$ and $\rho' = \rho_F(C(A'))$.

Recall that we can decompose $g_{IJ} = (\ophi_{IJ}\ophi'_{IJ})_{|_F}$ into $(\nu_{IJ,IJ}\pi_{IJ}\pi'_{IJ})_{|_F}$. We use $\nu_{IJ}$ as shorthand for $\nu_{IJ, IJ}$ since this is the only $\nu$ type that comes up in this section. Then, by Lemma~\ref{lem:standard},
\begin{align*}
  \tr(g_{IJ}) & = \tr_{|_F}(\nu_{IJ}\pi_{IJ}\pi'_{IJ}) \\
& = \tr_{|_F}\bigl( \nu_{IJ}  (\rho)(IJ)(KL) (\rho^{-1}) (\rho') (IJ)(KL) (\rho'^{-1}) \bigr).
\end{align*}



Note that as $g_{IJ}$ is a signed permutation matrix, it can only have nonzero trace if $|g_{IJ}|$ has nonzero trace. So we temporarily forget about the sign and compute:
\begin{align*}
\tr(|g_{IJ}|) & = \tr_{|_F}\bigl( (\rho)(IJ)(KL) (\rho^{-1}) (\rho') (IJ)(KL) (\rho'^{-1}) \bigr); \\
& = \tr_{|_F}\bigl( (\rho'^{-1} \rho)(IJ)(KL) (\rho'^{-1} \rho)^{-1} (IJ)(KL) \bigr) \\
& = \tr_{|_F}\bigl( \pi  (IJ)(KL) \bigr),
\end{align*}
where $\pi$ is a permutation with cycle type $(2)(2)$ (that is, an element of $\K$) because conjugation preserves cycle type. We also used that trace is invariant under cyclic permutation. The product $(\pi)(IJ)(KL)$, being an element of $K_4 = \K \cup \{()\}$, has cycle type either $(2)(2)$, in which case it contributes no trace, or $(1)(1)(1)(1)$, in which case it is the identity. Thus, if we have nontrivial trace, we need to be in the latter case:
\begin{equation}
\label{eqn:identity}
(\rho'^{-1} \rho)(IJ)(KL) (\rho'^{-1}\rho)^{-1} = (IJ)(KL).
\end{equation}

We now fix $I=1$. There are $6$ choices for $(\rho'^{-1}\rho)$ (recall that the condition is that we have a permutation in $S_4$ that fixes $b_1$, creating a $S_3$ subgroup inside $S_4$) with $3$ possible cycle types $(3), (2)(1)$, or $(1)(1)(1)$ when seen as permutations on $\{b_2,b_3,b_4\}$ (they would have cycle types $(3)(1), (2)(1)(1)$, or $(1)(1)(1)(1)$ as permutations on $B = \{b_1, b_2, b_3, b_4\}$). Let $M = 36864$ be the number of $(4,1)$ adinkras. We now split into cases based on the cycle  and count in terms of $M$.

\begin{remark}
\label{rem:simul}
If we relabel the fermions simultaneously for both adinkras, that corresponds to an additional conjugation of both $(\rho)(IJ)(KL) (\rho^{-1})$ and $(\rho') (IJ)(KL) (\rho'^{-1})$ by some permutation $\rho''$. Since relabeling the fermions does not change the edge dashings and the trace operator is symmetric over the fermions, we know that this operation preserves $\tr(g_{IJ})$ for all $I$ and $J$. This justifies e.g. replacing $\rho' = ()$ and replacing $\rho$ by $\rho'^{-1} \rho$.  
\end{remark}






\subsection{The $(3)$ case}

It is easy to check that no $\rho'^{-1} \rho$ which acts with cycle type $(3)$ on $\{f_2,f_3,f_4\}$ satisfies Equation~\ref{eqn:identity}, so the trace is $0$ in this case. $2$ out of the $6$ permutations in $S_3$ have this cycle type, so  $M^2/3$ pairs $(A,A')$ are in this case and all satisfy $g(A,A') = 0$ (and thus, $\mathcal{OG}(A,A') = 0$ as well).



\subsection{The $(2)(1)$ case}

After fixing $A'$, half of the possible $C(A)$ fall into the $(2)(1)$ case (as there are $3$ permutations with this cycle type out of the $6$ in $S_3$). Out of these $M^2/2$ pairs, recall that we need $A$ and $A'$ to have matching chirality, which happens with half of the choices of $A$. This means $M^2/4$ of the pairs $(A,A')$ have $(1 + \chi_0(A)\chi_0(A')) = 2 \neq 0$ and have the desired cycle type for $\rho'^{-1}\rho$. Suppose we have one of these pairs.

Without loss of generality, we can assume $\rho' = ()$ and $C(A) = C^0$  by Remark~\ref{rem:simul}. To get cycle type $(2)(1)$ on $\{2,3,4\}$, we can also assume $\rho = (3 4)$, so the chromotopologies look as they are in Figure~\ref{fig:(2)(1)}.

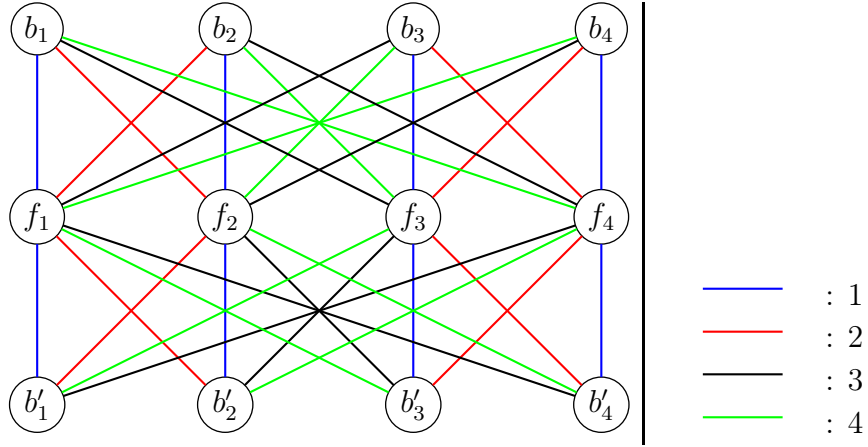
\begin{figure}
\begin{center}
\begin{tabular}{c|c}
\begin{tikzpicture}[scale=0.25]
\SetUpEdge[labelstyle={draw}]
\Vertex[L=$f_1$, x=-10,y=-5]{110}
\Vertex[L=$f_2$, x=0,y=-5]{101}
\Vertex[L=$f_3$, x=10,y=-5]{011}
\Vertex[L=$f_4$, x=20,y=-5]{000}

\Vertex[L=$b_1$, x=-10,y=5]{100}
\Vertex[L=$b_2$, x=0,y=5]{111}
\Vertex[L=$b_3$, x=10,y=5]{001}
\Vertex[L=$b_4$, x=20,y=5]{010}

\Vertex[L=$b_1'$, x=-10,y=-15]{100'}
\Vertex[L=$b_2'$, x=0,y=-15]{111'}
\Vertex[L=$b_3'$, x=10,y=-15]{001'}
\Vertex[L=$b_4'$, x=20,y=-15]{010'}

\Edge[color=blue](000)(010)
\Edge[color=blue](001)(011)
\Edge[color=blue](100)(110)
\Edge[color=blue](101)(111)
\Edge[color=red](100)(101)
\Edge[color=red](000)(001)
\Edge[color=red](010)(011)
\Edge[color=red](110)(111)
\Edge[color=green](000)(100)
\Edge[color=green](001)(101)
\Edge[color=green](010)(110)
\Edge[color=green](011)(111)
\Edge[color=black](000)(111)
\Edge[color=black](001)(110)
\Edge[color=black](100)(011)
\Edge[color=black](101)(010)

\Edge[color= blue](010')(000)
\Edge[color= blue](001')(011)
\Edge[color= blue](100')(110)
\Edge[color= blue](111')(101)
\Edge[color=  red](100')(101)
\Edge[color=  red](001')(000)
\Edge[color=  red](010')(011)
\Edge[color=  red](111')(110)
\Edge[color=black](100')(000)
\Edge[color=black](001')(101)
\Edge[color=black](010')(110)
\Edge[color=black](111')(011)
\Edge[color=green](111')(000)
\Edge[color=green](001')(110)
\Edge[color=green](100')(011)
\Edge[color=green](010')(101)

\end{tikzpicture} &
\begin{tabular}[b]{c}
\begin{tikzpicture}[scale=0.15]
\SetVertexNoLabel
\GraphInit[vstyle=Empty]
\SetUpEdge[labelstyle={draw}]
\Vertex[x=0,y=-3]{A}
\Vertex[x=10,y=-3]{B}
\Edge[color=blue](A)(B)
\end{tikzpicture} : 1 \\

\begin{tikzpicture}[scale=0.15]
\SetVertexNoLabel
\GraphInit[vstyle=Empty]
\SetUpEdge[labelstyle={draw}]
\Vertex[x=0,y=-3]{A}
\Vertex[x=10,y=-3]{B}
\Edge[color=red](A)(B)
\end{tikzpicture} : 2 \\

\begin{tikzpicture}[scale=0.15]
\SetVertexNoLabel
\GraphInit[vstyle=Empty]
\SetUpEdge[labelstyle={draw}]
\Vertex[x=0,y=-3]{A}
\Vertex[x=10,y=-3]{B}
\Edge[color=black](A)(B)
\end{tikzpicture} : 3 \\

\begin{tikzpicture}[scale=0.15]
\SetVertexNoLabel
\GraphInit[vstyle=Empty]
\SetUpEdge[labelstyle={draw}]
\Vertex[x=0,y=-3]{A}
\Vertex[x=10,y=-3]{B}
\Edge[color=green](A)(B)
\end{tikzpicture} : 4 

\end{tabular}
\end{tabular}
\caption{Left: $C(A) \cong C^0$ on top and $C(A')$ on bottom, sharing the fermions. The $b_i$ are the bosons for $A$ and the $b'_i$ are bosons for $A'$. Since $\rho=(34)$, the edges of colors $3$ and $4$ are switched between the two adinkras. \label{fig:(2)(1)}}
\end{center}
\end{figure}

The only term to provide nonzero trace to Equation~\ref{eqn:identity} is $g_{12}$, because $(12)(34)$ is the only $(IJ)(KL)$ to be invariant under conjugation by $\rho = (34)$, so
\begin{align*}
g(A,A') = \tr(g_{12}) & = \tr_{|_F}\bigl( \nu_{12} (\rho)(12)(34) (\rho^{-1}) (\rho') (12)(34) (\rho'^{-1}) \bigr) = \tr_{|_F}\bigl( \nu_{12}).
\end{align*}

By performing vertex flips on the bosons in both adinkras, we can assume all the color $1$ edges are solid (and preserve chiralities). By the proof of Lemma~\ref{lem:4-1-dashings-freedom}, we may parametrize $x = E_{1,2}, y = E_{1,3}, z = E_{1,4}$ and obtain $E_{2,3} = sxy, E_{2,4} = -sxz, E_{3,4} = -syz$ in $A$, where $s = \chi_0(A)$. Similarly, we can parametrize $x', y', z'$ for $A'$ (which we know has the same chirality) and obtain values $E'_{2,3} = sx'y', E'_{2,4} = -sx'z', E'_{3,4} = -sy'z'$. The only difference is that e.g. $E_{2,4}$ has color $3$ in $A$ but $E'_{2,4}$ has color $4$ in $A'$. We can now compute:
\begin{align*}
\tr(g_{12}) & =  E_{1,1}E_{2,1}E_{2,2}'E_{1,2}' + E_{2,2}E_{1,2}E_{1,1}'E_{2,1}'  + E_{3,3}E_{4,3}E_{4,4}'E_{3,4}' + E_{4,4}E_{3,4}E_{3,3}'E_{4,3}' \\
& =  E_{2,1}E_{1,2}' + E_{1,2}E_{2,1}' + E_{4,3}E_{3,4}' + E_{3,4}E_{4,3}' \\
& =  -2 xx' - 2 yy'zz'. 
\end{align*}

By the symmetry proven in Lemma~\ref{lem:4-1-dashings-freedom}, the parameters $\{x, x', y, y', z, z'\}$ are each equally distributed among the $2$ choices of $\pm 1$ in our set of pairs $(A,A')$. This means both $(-2xx')$ and $(-2yy'zz')$ are independently uniformly distributed in $\{\pm 2\}$. Thus, $1/4$ of the time we get $\tr(g_{12}) = -4$, $1/4$ of the time we get $4$, and $1/2$ of the time we get $0$. 

Recalling that $M^2/4$ pairs satisfy our criteria, we know that

\begin{itemize}
\item $(M^2/2 - M^2/16 - M^2/16) = 3M^2/8$ pairs have gadget value $0$ from either having $g(A,A') = 0$ or having different chiralities;
  \item $(1/4)(M^2/4) = M^2/16$ pairs have the same chirality and also satisfy $g(A, A') = 4$, so $\mathcal{OG}(A,A') = -\frac{4}{12} = -1/3$;
\item finally, $M^2/16$ pairs with matching chirality satisfy $g(A,A') = -4$, so they have $\mathcal{OG}(A,A') = 1/3$.

  \end{itemize}
  


\subsection{The $(1)(1)(1)$ Case}

Recall that this is the case where $(\rho'^{-1}\rho) = ()$; equivalently, $\rho = \rho'$. Seen as a permutation on $\{2,3,4\}$, such a permutation has cycle type $(1)(1)(1)$.  After fixing $C(A')$, $1/6$ of the possible $C(A)$ fall into the this case. Again requiring $A$ and $A'$ to have matching chirality, we obtain that $M^2/12$ of all possible pairs $(A,A')$ have $(1 + \chi_0(A)\chi_0(A')) = 2 \neq 0$ and have $\rho'^{-1}\rho = ()$.  


By simultaneous conjugation, we can again assume that $\rho' = ()$, which forces $\rho = ()$. This means $C(A) = C^0$. As in the last case, we can:
\begin{itemize}
\item assume all the color $1$ edges are solid by vertex flips (on the bosons);
\item parametrize $x = E_{1,2}, y = E_{1,3}, z = E_{1,4}$ and obtain $E_{2,3} = sxy, E_{2,4} = -sxz, E_{3,4} = -syz$ for $A$, and similarly $E'_{2,3} = sx'y', E'_{2,4} = -sx'z', E'_{3,4} = -sy'z'$ for $A'$. 
\end{itemize}



We let $X = xx', Y = yy', Z = zz'$ and compute $\tr(g_{12})$: 
\[
\tr(g_{12}) = -2 xx' - 2 yy'zz' = -2X - 2YZ, 
\]
which is identical to the $(2)(1)$ case (even though the graphs are not the same, the edges of colors $1$ and $2$ are the same). Doing similar work for $g_{13}$ and $g_{14}$ and summing gives
\begin{equation*}
g(A,A') = 2(-X - Y - Z - XY - YZ - ZX) = - 2[(1+X)(1+Y)(1+Z) - 1 - XYZ].
\end{equation*}


Again by Lemma~\ref{lem:4-1-dashings-freedom}, the $X, Y, Z$ are each equally distributed among $\pm 1$. We now split our $M^2/12$ cases into subcases based on the signs of $(X,Y,Z)$:
\begin{itemize}
\item $(+, +, +)$: this happens $1/8$ of the time (total $M^2/96$), giving $g(A,A') = -2((1+1)(1+1)(1+1)-1-1) = -12$.
\item $(+,+,-), (+,-,+), (-,+,+)$: this happens $3/8$ of the time (total $M^2/32$), giving $g(A,A') = -2(0-1+1) = 0$.
\item $(+,-,-), (-,+,-), (-,-,+)$: this happens $3/8$ of the time (total $M^2/32$), giving $g(A,A') = -2(0-1-1) = 4$.
\item $(-,-,-)$: this happens $1/8$ of the time (total $M^2/96$), giving $g(A,A') = -(0-1+1) = 0$.
\end{itemize}
To summarize:
\begin{itemize}
\item $(M^2/12 + M^2/32 + M^2/96) = M^2/8$ pairs give $\mathcal{OG}(A,A') = 0$: the first $M^2/12$ term are for pairs with matched chirality; the latter $2$ terms are for pairs with matching chirality but $g(A,A') = 0$.
\item  $M^2/96$ pairs give matching chirality and $g(A,A') = -12$ (and thus $\mathcal{OG}(A,A') = 1$);
\item and $M^2/32$ pairs give $g(A,A') = 4$ and $\mathcal{OG}(A,A') = -1/3$. 
\end{itemize}



\subsection{The Final Count}

We now combine the work of the previous $2$ subsections to count instances of nonzero values of $\mathcal{OG}(A,A')$: 
\begin{itemize}
\item $1$: this only appears in the $(1)(1)(1)$ case, with $M^2/96 = 14,155,776$ pairs.
\item $1/3$: this only appears in the $(2)(1)$ case, with count $M^2/16 = 84,934,656$.
\item $-1/3$: this appears in both the $(2)(1)$ and $(1)(1)(1)$ cases, totalling $M^2/16 + M^2/32 = 3M^2/32 = 127,401,984$.
\item $0$: all $M^2/3$ cases from the $(3)$ case, $3M^2/8$ pairs from the $(2)(1)$ case, and $M^2/8$ from the $(1)(1)(1)$ case evaluate to $0$. In total, we have $3M^2/8 + M^2/3 = 5M^2/6 = 1,132,462,080$ pairs.
\end{itemize}

These numbers exactly match the empirical data in \cite{gatesjr.AdinkrasOrderedQuartets2017} that we showed in Table~\ref{table:computation}.

\section{Values of the Symmetrized Gadget}
\label{sec:jordan}
Having addressed the values of the original gadget, we now compute the values of the symmetrized gadget. Recall that, given two adinkras $A$ and $A'$, the symmetrized gadget is
\begin{eqnarray*}
	\mathcal{SG}(A,A'):=\gen{X_A,X_{A'}}&=&\sum_{I_1<J_1,I_2<J_2}\tr_{|_F}(\ophi_{I_1J_1}\ophi'_{I_2J_2})\\
	&=&\sum_{I_1<J_1,I_2<J_2}\tr_{|_F}(\nu_{I_1J_1,I_2J_2}\pi_{I_1J_1}\pi'_{I_2J_2}).
\end{eqnarray*}

We begin our analysis by calculating the value $\mathcal{SG}(A,A)$.
\begin{lem}\label{lem:square-trace} 	For each adinkra $A$, we have
	$$\tr(X_A \cdot X_A)=\tr_{|_F}\left((\ophi_{12}+\ophi_{34})^2 + (\ophi_{13}+\ophi_{24})^2 + (\ophi_{14}+\ophi_{23})^2\right).$$
\end{lem}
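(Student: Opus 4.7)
The plan is to expand $\tr(X_A\cdot X_A)$ using bilinearity of the trace, classify the resulting $36$ cross terms by how the two index pairs $\{I,J\}$ and $\{K,L\}$ overlap, and then show that every term where the pairs share exactly one element vanishes. The surviving $12$ terms will then reassemble, by inspection, into the three squares on the right-hand side.

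More concretely, I would first write
\[
\tr_{|_F}(X_A\cdot X_A) \;=\; \sum_{I_1<J_1}\sum_{I_2<J_2}\tr_{|_F}\bigl(\ophi_{I_1J_1}\ophi_{I_2J_2}\bigr),
\]
and partition the $36$ ordered pairs $(\{I_1,J_1\},\{I_2,J_2\})$ into three classes: equal pairs ($6$ terms), complementary pairs ($6$ terms), and overlapping pairs that share exactly one element ($24$ terms). The right-hand side of the lemma, when expanded, yields exactly the $6$ equal-pair terms (the squares like $\ophi_{12}^2$) together with the $6$ complementary cross terms (like $\ophi_{12}\ophi_{34}+\ophi_{34}\ophi_{12}$), so it will suffice to kill the $24$ overlapping terms.

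For the overlapping case, I would use Lemma~\ref{lem:operator-properties}: since $\ophi_I^2 = \mathbbm{1}$ and $\ophi_I\ophi_J = -\ophi_J\ophi_I$, any product $\ophi_I\ophi_J\ophi_K\ophi_L$ in which $\{I,J\}$ and $\{K,L\}$ share exactly one index collapses to $\pm\ophi_M\ophi_N = \pm\ophi_{MN}$ for a single holoraumy operator with $M\neq N$. Restricted to the fermions, $\ophi_{MN}|_F$ is a signed permutation matrix with underlying permutation $\pi_{MN}|_F$; by Corollary~\ref{cor:K_4}, $\pi_{MN}|_F$ is one of the three non-identity elements of $K_4$ acting on $F$, which as permutations of $\{f_1,\ldots,f_4\}$ all have cycle type $(2)(2)$. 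Hence $\pi_{MN}|_F$ has no fixed points, so $\tr_{|_F}(\ophi_{MN}) = 0$, and each of the $24$ overlapping cross terms contributes zero.

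The only non-routine step is the bookkeeping in the previous paragraph: one must verify that every overlap pattern (for instance $\ophi_{12}\ophi_{13}$, $\ophi_{13}\ophi_{12}$, $\ophi_{12}\ophi_{23}$, etc.) really does simplify to a single $\pm\ophi_{MN}$ with $M\neq N$ rather than accidentally to $\pm\mathbbm{1}$. This is immediate from the anticommutation relations once one notes that the shared color cancels via $\ophi_I^2=\mathbbm{1}$, leaving two distinct remaining colors. After this verification, matching the surviving $12$ terms against the three expanded squares on the right-hand side is purely notational and completes the proof.
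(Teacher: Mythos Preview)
Your proof is correct and essentially identical to the paper's: both expand $X_A\cdot X_A$, kill the terms where the two index pairs overlap in exactly one color by observing that the underlying permutation is a nonidentity element of $K_4$ (hence fixed-point-free on $F$), and regroup the surviving twelve terms into the three complementary-pair squares. Your intermediate step of collapsing each overlapping product to $\pm\ophi_{MN}$ via anticommutation is a minor rephrasing of the paper's direct appeal to $\pi_{IJ}\neq\pi_{KL}$ in $K_4$, not a substantively different argument.
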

\begin{proof}
By definition, we have
$$X_A \cdot X_A=\sum_{I<J,K<L}\ophi_{IJ}\ophi_{KL}.$$
If $|\{I,J\}\cap\{K,L\}|=1$, then $\pi_{IJ}\neq\pi_{KL}$, from which it follows that $\pi_{IJ}\ophi_{KL}\neq 1\in K$. 
Therefore, the permutation $\pi_{IJ}\pi_{KL}$ has no fixed points and the operator $\ophi_{IJ}\ophi_{KL}$ is traceless.

Next, for each $I<J$, let $K<L$ be the complement. Then
$$(\ophi_{IJ}+\ophi_{KL})^2=\ophi_{IJ}\ophi_{IJ}+\ophi_{IJ}\ophi_{KL}+\ophi_{KL}\ophi_{IJ}+\ophi_{KL}\ophi_{KL}.$$
The above sum contains all terms in the expansion of $X_A \cdot X_A$ corresponding to the bipartition $\{1,2,3,4\}=\{I,J\}\cup\{K,L\}$. The result follows from taking the sum over all three bipartitions, and then taking the trace, restricted to the fermions.
\end{proof}

\begin{prop}\label{prop:sg_chir}
	For each adinkra $A$, we have
	$$\mathcal{SG}(A,A)=-24-8\chi_0(A)\in\{-16,-32\}.$$ 
	In particular, the value $\mathcal{SG}(A,A)$ determines the chirality class of $A$. 
\end{prop}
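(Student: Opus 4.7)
The plan is to apply Lemma~\ref{lem:square-trace}, which reduces $\mathcal{SG}(A,A) = \tr(X_A \cdot X_A)$ to a sum of three fermionic traces, one for each complementary bipartition of $\{1,2,3,4\}$. The key observation is that, restricted to the fermionic subspace, the two operators $\ophi_{IJ}$ and $\ophi_{KL}$ associated to a complementary pair differ only by a scalar factor of $\pm \chi_0(A)$ via Corollary~\ref{cor:signed-properties}. This collapses each squared sum to a scalar multiple of the identity on $\mathbf{C}^F$, after which the trace is trivial to extract.

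Concretely, items (3), (4), and (5) of Corollary~\ref{cor:signed-properties} read
\[
(\ophi_{12})_{|_F} = \chi_0(A)(\ophi_{34})_{|_F}, \quad (\ophi_{13})_{|_F} = -\chi_0(A)(\ophi_{24})_{|_F}, \quad (\ophi_{14})_{|_F} = \chi_0(A)(\ophi_{23})_{|_F}.
\]
Thus $(\ophi_{IJ} + \ophi_{KL})_{|_F}$ is a scalar multiple of $(\ophi_{KL})_{|_F}$, with multiplier $(1+\chi_0(A))$ for the bipartitions $\{12,34\}$ and $\{14,23\}$, and $(1-\chi_0(A))$ for $\{13,24\}$. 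Squaring each expression and using Lemma~\ref{lem:operator-properties}(2), which (since $\ophi_{KL}^2 = -\mathbbm{1}_8$ is block-diagonal in the boson/fermion decomposition) gives $(\ophi_{KL})_{|_F}^2 = -\mathbbm{1}_4$, each of the three terms becomes a scalar matrix on $\mathbf{C}^F$ with trace $-4(1\pm\chi_0(A))^2$.

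Summing the three contributions and using $\chi_0(A)^2=1$,
\[
\mathcal{SG}(A,A) = -4\bigl(2(1+\chi_0(A))^2 + (1-\chi_0(A))^2\bigr) = -24 - 8\chi_0(A),
\]
which evaluates to $-32$ when $\chi_0(A)=1$ and to $-16$ when $\chi_0(A)=-1$. Since these two values are distinct, $\mathcal{SG}(A,A)$ recovers the chirality class of $A$. There is no truly difficult step here once Lemma~\ref{lem:square-trace} is in hand; the only place to be careful is in correctly reading off the signs from items (3)--(5) of Corollary~\ref{cor:signed-properties} (two are $+\chi_0(A)$ and one is $-\chi_0(A)$, which is exactly what breaks the symmetry and produces the $\chi_0(A)$-dependence), and in remembering that restricting $\ophi_{KL}^2 = -\mathbbm{1}_8$ to the fermionic block yields $-\mathbbm{1}_4$ rather than $+\mathbbm{1}_4$.
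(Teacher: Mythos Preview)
Your proof is correct and follows essentially the same approach as the paper: both arguments invoke Lemma~\ref{lem:square-trace} and then use Corollary~\ref{cor:signed-properties} to collapse the three squared sums to scalars. The only cosmetic difference is that you restrict to the fermionic block \emph{before} squaring (using items (3)--(5) to turn each $(\ophi_{IJ}+\ophi_{KL})_{|_F}$ into $(1\pm\chi_0(A))(\ophi_{KL})_{|_F}$), whereas the paper expands the squares on all of $\mathbf{C}^V$, combines the cross terms via item (2) to get $-6\,\mathbbm{1}_8+2\ophi_{1234}$, and only then restricts; the two computations are equivalent and equally short.
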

\begin{proof}
	
  Using Lemma \ref{lem:square-trace}, we write 
  \begin{eqnarray*}
    & & (\ophi_{12}+\ophi_{34})^2 + (\ophi_{13}+\ophi_{24})^2 + (\ophi_{14}+\ophi_{23})^2 \\
    & = & -6*\mathbbm{1}_8 + 2\ophi_{1234} + 2\ophi_{1324} + 2\ophi_{1423} \\
    & = & -6*\mathbbm{1}_8 + 2 \ophi_{1234} \\
    & = & -6*\mathbbm{1}_8 + 2 \begin{bmatrix} -\chi_0(A)\mathbbm{1}_{4} & 0 \\ 0 & \chi_0(A) \mathbbm{1}_{4}  \end{bmatrix}
  \end{eqnarray*}
using Lemma \ref{lem:signed-properties}. Restricting to fermions, $\mathcal{SG}(A,A) = \tr \left((-6 - 2\chi_0(A))\mathbbm{1}_4\right)$, which equals the desired value.

\end{proof}

\begin{remark}
	This is not true for the original gadget defined earlier: it is possible to have $\mathcal{OG}(A,A)=\mathcal{OG}(A',A')$ even though $\chi_0(A)\neq \chi_0(A')$. Thus, one benefit of the symmetrized gadget over the original gadget is that it ``detects'' chirality classes.
\end{remark}

\begin{prop}\label{prop:SG_range}
The values of $\mathcal{SG}(A,A')$ as we vary the adinkra representations consist of $\{0,\pm 16,\pm 32\}$. 
\end{prop}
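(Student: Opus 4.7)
The plan is to first exploit the chirality-specific simplifications of $X_A|_F$ that are implicit in the proof of Proposition~\ref{prop:sg_chir}, and then reduce $\mathcal{SG}(A,A')$ to a small set of traces whose $K_4$ structure tightly constrains the values. Applying Corollary~\ref{cor:signed-properties} to each bipartition $\{IJ|KL\}$ of $\{1,2,3,4\}$ yields expressions of the form $(\ophi_{IJ}+\ophi_{KL})|_F = (1\pm\chi_0(A))(\ophi_{KL})|_F$, where the sign depends on the parity of the color-permutation $(IJKL)$ and causes exactly one of the three restrictions to vanish. Summing over the three bipartitions gives
\[ X_A|_F = \begin{cases} 2\bigl[(\ophi_{23})|_F + (\ophi_{34})|_F\bigr] & \text{if } \chi_0(A)=+1, \\ 2(\ophi_{24})|_F & \text{if } \chi_0(A)=-1, \end{cases} \]
so $\mathcal{SG}(A,A') = \tr_{|_F}(X_AX_{A'})$ reduces to at most four traces of the form $\tr_{|_F}(\ophi_{IJ}\ophi'_{I'J'})$.

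The next step is to bound each such trace to $\{-4,0,+4\}$. The trace is nonzero only when $\pi_{IJ}|_F = \pi'_{I'J'}|_F$, since otherwise the underlying product permutation is a nonidentity element of $\K$ and hence fixed-point-free on $F$. When the permutations do match, the product is diagonal with $\pm 1$ entries; the structural facts that $\pi_{IJ}|_F$ is a product of two fixed-point-free transpositions and that $(\ophi_{IJ}|_F)^2 = -\mathbbm{1}_4$ force the two fermions in each transposition to carry opposite signs, so a short calculation shows each matching transposition contributes $\pm 2$ to the trace.

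Case-splitting on $(\chi_0(A),\chi_0(A'))$ then establishes containment. The both-trans case gives $\mathcal{SG}(A,A') = 4\tr_{|_F}(\ophi_{24}\ophi'_{24})\in\{0,\pm 16\}$. The mixed-chirality case gives two trace terms, but since $\pi_{23}\ne\pi_{34}$ are distinct elements of $\K$, at most one can equal $\pi'_{24}$, so $\mathcal{SG}(A,A')\in\{0,\pm 16\}$. The both-cis case has four trace terms; since $\{\pi_{23},\pi_{34}\}$ and $\{\pi'_{23},\pi'_{34}\}$ are two-element subsets of the three-element set $\K$, they share either one or two elements, so one or two of the four traces is nonzero, yielding $\mathcal{SG}(A,A')\in\{0,\pm 16,\pm 32\}$.

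Attainment of $-16$ and $-32$ follows immediately from Proposition~\ref{prop:sg_chir} with $A=A'$, and $0$ is attained by any pair whose $\pi$-labels fail to match. For $+16$ and $+32$ I would exhibit explicit adinkra pairs whose dashings reverse the relevant trace signs, using the dashing freedom of Lemma~\ref{lem:4-1-dashings-freedom}. The main technical obstacle I anticipate is the both-cis case: one must verify that both nonzero traces can simultaneously be $+4$ so that $+32$ is genuinely attained, which reduces to exhibiting a concrete dashing configuration. This should be routine but requires care.
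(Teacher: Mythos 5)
Your proposal is correct and follows essentially the same route as the paper's proof: both rest on Corollary~\ref{cor:signed-properties} to collapse each complementary pair $\ophi_{IJ}+\ophi_{KL}$ into a $(1\pm\chi_0)$ multiple of a single operator, on the observation that $\tr_{|_F}(\ophi_{\sigma}\ophi'_{\sigma})\in\{0,\pm4\}$ forced by the odd-dashing sign pairing, and on counting how many elements of $\K$ can match between the two adinkras. Your explicit chirality-dependent formula for $X_A|_F$ is a clean repackaging of the paper's case analysis on where the negative sign $s(I_1J_1K_1L_1)$ falls, and your honest flagging of the attainment of $+16$ and $+32$ is fair, since the paper itself defers that point to computer enumeration.
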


\begin{proof}
The proof has the same spirit as that of Proposition \ref{prop:sg_chir}. Consider two adinkras $A$ and $A'$. The product $X_A\cdot X_{A'}$ expands as 
$$X_A\cdot X_{A'}=\sum_{I_1<J_1,I_2<J_2}\left(\ophi_{I_1J_1}\ophi'_{I_2J_2}\right)_{|_F}.$$
If $\pi_{I_1J_1}\neq\pi'_{I_2J_2}$, then the product $\ophi_{I_1J_1}\ophi'_{I_2J_2}$ is traceless since it has no entries on the main diagonal. 
This allows us to write the expansion of the gadget as follows:
\begin{eqnarray*}
	\tr(X_A\cdot X_{A'})&=&\sum_{\pi_{I_1J_1}=\pi'_{I_2J_2}}\tr_{|_F}(\ophi_{I_1J_1}\ophi'_{I_2J_2})\\
			&=&\sum_{\sigma\in\K}\bigg(\sum_{\pi_{I_1J_1}=\pi'_{I_2J_2}=\sigma}\tr_{|_F}(\ophi_{I_1J_1}\ophi'_{I_2J_2})\bigg).
\end{eqnarray*}
Let $\sigma\in\K$ and suppose that $\pi_{I_1J_1}=\pi'_{I_2J_2}=\sigma$. 
If $K_1<L_1$ and $K_2<L_2$ denote the corresponding complementary color sets, then we have
$$\pi_{I_1J_1}=\pi_{K_1L_1}=\pi'_{I_2J_2}=\pi'_{K_2L_2}=\sigma.$$
For each $\sigma\in\K$, there are precisely four terms in the above sum for which $\pi_{IJ}=\pi'_{KL}=\sigma$. 
We reorganize these four terms as follows:
$$\sum_{\pi_{I_1J_1}=\pi'_{I_2J_2}=\sigma}\tr_{|_F}(\ophi_{I_1J_1}\ophi'_{I_2J_2})=\tr_{|_F}\left((\ophi_{I_1J_1}+\ophi_{K_1L_1})(\ophi'_{I_2J_2}+\ophi'_{K_2L_2})\right).$$

We now use Corollary~\ref{cor:signed-properties} to rewrite e.g.
\[ \ophi_{I_1J_1}+\ophi_{K_1L_1} = \begin{bmatrix} (1 + s(I_1J_1K_1L_1)\chi_0(A))(\ophi_{I_1J_1})_{|_F} & 0 \\ 0 & (1- s(I_1J_1K_1L_1)\chi_0(A) (\ophi_{I_1J_1})_{|_B}\end{bmatrix}.\]
It follows that 
$$\tr_{|_F}\left((\ophi_{I_1J_1}+\ophi_{K_1L_1})(\ophi'_{I_2J_2}+\ophi'_{K_2L_2})\right) = (1 \pm \chi_0(A))(1 \pm \chi_0(A'))\tr_{|_F}(\ophi_{I_1J_1}\ophi'_{I_2J_2}), $$
where $I_1<J_1,I_2<J_2$ are two color pairs for which $\pi_{I_1J_1}=\pi'_{I_2J_2}=\sigma$, chosen arbitrarily. Furthermore, the two $\pm$'s refer to the signs $s(I_1J_1K_1L_1)$ and $s(I_2J_2K_2L_2)$, respectively. Label the above quantity $S_\sigma$. Then the symmetrized gadget can be re-written as
$$\mathcal{SG}(A,A')=\sum_{\sigma\in\K}S_\sigma = \sum_{\sigma\in\K}(1 \pm \chi_0(A))(1 \pm \chi_0(A'))\tr_{|_F}(\ophi_{\sigma}\ophi'_{\sigma}).$$


Let us now examine the value of $\tr_{|_F}(\ophi_{\sigma}\ophi'_{\sigma})$. 
If $\sigma$ sends vertex $v$ to $w$, then $\sigma$ sends $w$ to $v$; 
the composition $\ophi_{\sigma}\ophi'_{\sigma'}$ sends $v$ to $\pm v$. 
The odd dashing condition implies that if $v$ is sent to $\pm v$, then $w$ must also go to $\pm w$---that is, the signs of $v$ and $w$ agree. The other two vertices must also agree by elimination, though they can have either sign independent of $v$ and $w$. Thus, the possible values of $\tr_{|_F}(\ophi_{\sigma}\ophi'_{\sigma})$ are $\{0,\pm 4\}$, from which it follows that each $S_\sigma\in\{0,\pm 16\}$. 


Consider the sign $s(I_1J_1K_1L_1)$ in front of $\chi_0(A)$. This sign is negative for exactly one value of $\tau \in \K$ (see Corollary~\ref{cor:signed-properties}) and positive for the other $2$ values. Similarly, there is a unique group element $\tau \in \K'$ for adinkra $A'$ where the sign is negative. This leads to two cases:
\begin{itemize}
\item If $\tau\neq \tau'$, let $\sigma$ be the remaining non-trivial element of $K$. 
  Then, we have
  \begin{align*}
S_\tau  = &(1-\chi_0(A))(1+\chi_0(A'))\tr_{|_F}(\ophi_\tau\ophi'_\tau) \\
S_{\tau'}  = &(1+\chi_0(A))(1-\chi_0(A'))\tr_{|_F}(\ophi_{\tau'}\ophi'_{\tau'}) \\
S_\sigma  = &(1+\chi_0(A))(1+\chi_0(A'))\tr_{|_F}(\ophi_\sigma\ophi'_\sigma) 
  \end{align*}
Whenever e.g. $(1-\chi_0(A)) \neq 0$, we must have $(1+\chi_0(A)) = 0$. This logic shows that at most one of these $3$ terms can be nonzero, because no $2$ terms have the same sign pattern for both $\chi_0(A)$ and $\chi_0(A')$. This means $\mathcal{SG}$ must take value in $\{0,\pm 16\}$. 
\item If $\tau=\tau'$, then we have one contribution of the form $$S_\tau=(1-\chi_0(A))(1-\chi_0(A'))\tr_{|_F}(\ophi_\tau\ophi'_\tau)$$
  and $2$ contributions of the form
  $$S_\sigma=(1+\chi_0(A))(1+\chi_0(A'))\tr_{|_F}(\ophi_\sigma\ophi'_\sigma),\ \sigma\neq\tau.$$
As before, the two types of contributions cannot both be nonzero.

  If $\chi_0(A) \neq \chi_0(A')$, all the $S_\sigma$ must vanish and we get $0$ for $\mathcal{SG}$. 
If $\chi_0(A)=\chi_0(A)'=-1$, then only $S_\tau$ can be non-zero and $\mathcal{SG}$ can take values in $\{0,\pm 16\}$.
If $\chi_0(A)=\chi_0(A')=1$, then only the remaining two $S_\sigma$ for $\sigma\neq\tau$ can be non-zero, from which it follows that $\mathcal{SG}$ can take values in $\{0,\pm 16,\pm 32\}$.
\end{itemize}

\end{proof}

To confirm, our own computer numeration gives Table~\ref{table:computation_SG}, in similar spirit as Table~\ref{table:computation}. The values corroborate Proposition~\ref{prop:SG_range}.
\begin{table}[ht]
\caption{Computations from \cite{gatesjr.AdinkrasOrderedQuartets2017}.}
\begin{center}
\begin{tabular}{|c|c|}
\hline
  value of $\mathcal{SG}(A,A')$ & number of pairs $(A,A')$ \\
\hline
$32$ & $14,155,776$ \\   
$-32$ & $14,155,776$ \\   
$16$ & $198,180,864$ \\  
$-16$ & $198,180,864$ \\  
  $0$ & $934,281,216$ \\
\hline
\end{tabular}
\end{center}
\label{table:computation_SG}
\end{table}

\section{Exploring Gadgets in Higher Dimensions}
\label{sec:gadgets-general}

The overall problem of finding a ``good'' gadget for all adinkras is interesting, though ill-defined. A few things we would want from such a tool are: 
\begin{enumerate}
\item relatively few values in its range (akin to root systems having a few number of inner product values). One benefit of this feature would be to suggest good classification schemes. For example, we may be able to partition the set of adinkras into equivalence classes based on gadget values; having few values in the range would make these equivalence classes more meaningful. 
\item nice interpretations for values; for example, $\mathcal{OG}$ ensures two adinkras with different chirality classes give value $0$. Because the converse is not true, it would be good to have a gadget satisfying both directions. Also, $\mathcal{SG}$ detects the chirality class for a single adinkra (whereas $\mathcal{OG}$ does not).
\item mathematical / physical ``naturality.'' It would be nice for the concept to generalize or relate to concepts with good precedent from e.g. representation theory or physics.
\end{enumerate}

Gates has proposed one alternative to his original gadget construction in \cite{gatesAdinkrasOrderedQuartets2018}, in which a symmetric version of the gadget appears. 
This gadget is different from the symmetrized gadget introduced in this paper, but it still enjoys many good properties. 
In particular, the gadget defined in \cite{gatesAdinkrasOrderedQuartets2018} can detect chirality. 
While the techniques of our paper can similarly explain the values Gates' new gadget, one is still left with trying to find a mathematically natural foundation for the new gadget. 

In this section, we explore the problem of proposing gadgets for higher dimensions through a combination of the lens above.

\subsection{Generalizing to $(n=4,k=0)$ Adinkras}
\label{sec:computational}

Most adinkras of physical interest have $n$ greater than $4$.  The exponential growth with $n$ of the number of $(n, k)$ adinkras prohibits brute-force computation (in the manner of \cite{gatesjr.AdinkrasOrderedQuartets2017}) of $\mathcal{OG}$ between all pairs of $(n, k)$ adinkras for general $n$ and $k$.  The next smallest class of adinkras is the class of $(n=4, k=0)$ adinkras, of which there are $6,658,877,030,400$. These adinkras have $2^{4-0} = 16$ vertices and $4$ colors, with the underlying graph isomorphic to the $4$-dimensional Hamming cube. Using a more efficient algorithmic approach than that of \cite{gatesjr.AdinkrasOrderedQuartets2017}, we compute the range of $\mathcal{OG}$ for the $(4, 0)$ adinkras to be  $$(-1/24)\{0, \pm 2, \pm 4, \pm 6, \pm 8, \pm 10, \pm 12, -14, \pm 16, -18, -24, -28, -48\}.$$
We emphasize that the negative signs, as opposed to $\pm$, on some of the entries (especially $14$) are \textbf{not} typos!

That the range of $(4, 0)$ adinkra gadgets does not admit an obvious geometric interpretation, combined with the fact that $(4, 0)$ adinkras, unlike $(4, 1)$ adinkras, depict reducible supermultiplets, suggests that in addition to distinguishing adinkras within the same $(n, k)$ class, perhaps gadgets can capture, with their ranges, qualitative distinctions between $(n, k)$ classes.  One may e.g. wish to define a gadget whose range possesses a certain mathematical property precisely when it is computed over a class of adinkras with a certain physically meaningful property.

No other proposed gadgets of the same general form as $\mathcal{OG}$, i.e. sums of traces of products, have yielded more manageable ranges for the $(4, 0)$ adinkras. This includes the symmetrized gadget $\mathcal{SG}$, which we compute to have the range
$$\{i | i \in 2\ZZ, -34 \leq i \leq 34\} \cup \{\pm 40, \pm 48\}.$$
We plan to include details about the algorithmic and computational aspects of this problem in upcoming works.

\subsection{Representation-theoretic Interpretations}
\label{sec:representation-theoretical}

There is a representation-theoretic interpretation of the two gadgets that sheds more ``philosophical light'' on the phenomena they exhibit, using the language of group characters of tensor products of representations.

Given a group $G$ and two $G$-modules $V$ and $W$ with characters $\chi_V$ and $\chi_W$ respectively, there are two ``natural'' actions on $V \otimes W$ related to $G$:
\begin{itemize}
\item The first is a $G \times G$ action where $$(g_1, g_2)(v \otimes w) = (g_1v \otimes g_2w).$$
  We denote \emph{outer product module} by $V \otimes W$. It has character $$\chi_{V \otimes W}(g_1,g_2) = \tr(V(g_1))\tr(W(g_2)).$$ Given a character, a natural object is the sum of the character over the group, so in the first case, we would get the object $$ \sum_{(g_1, g_2) \in G \times G} \tr(V(g_1))\tr(W(g_2)) = \bigl(\sum_{g \in G} \tr(V(g))\bigr)\bigl(\sum_{g \in G}\tr(W(g)))\bigr).$$ To compare, our symmetrized gadget is doing something like $$\sum_{(g_1, g_2) \in G \times G} \tr(V(g_1)W(g_2))$$
\item The second is a $G$ action where $$g(v \otimes w) = (gv \otimes gw).$$ We denote this \emph{inner product module} by $V \hat{\otimes} W$. It has character $$\chi_{V \hat{\otimes} W}(g) = \tr(V(g))\tr(W(g)).$$  Summing over the group, we would get the object $$\sum_{g \in G} \tr(V(g))\tr(W(g)).$$ To compare, the (non-symmetrized) gadget is something like $$\sum_{g \in G} \tr(V(g)W(g)).$$

\end{itemize}

To summarize, both proposed gadgets are similar to very natural objects coming from representation theory with one very important difference; namely, the traces are taken \textbf{after} the multiplication, which makes the objects seem less natural. Thus, it is somewhat surprising that the gadgets have nice properties at all, if one believes that nice mathematical results always happen for natural reasons. One interpretation is that we are simply lucky --- that is, our dimensions are very small and our terms ended up coincidentally close to a Klein-$4$ group, as another consequence of ``The Law of Small Numbers.''

\section{Conclusion}
\label{sec:conclusion}

In our work, we have examined some numerical phenomena that appeared for gadgets from the study of adinkras and garden algebras. We gave some theoretical reasoning backing up these observations and suggested another definition of gadgets that might be useful, ending with a top-down view of what happens when we try to generalize our gadgets to higher dimensions. We are motivated by the desire to simplify and clarify; by trying to find parsimonious reasons why the gadget takes so few values, we hope to understand better the mathematical structures behind a mysterious-looking object.

Fruitful future work would include physical justification of gadgets definitions (old or new), hopefully inspired by independently interesting computations that arise in higher dimensions, or by independently rich mathematical/physical theory.

\section*{Acknowledgments}

We thank S. James Gates, Jr. for introducing his concept of the gadget to us and Kevin Iga for extensive conversation about this problem. We would further like to acknowledge our participation in the annual Brown University Adinkra Math/Phys ``Hangouts'' in 2017. I. Friend thanks Adam Artymowicz for his library of NumPy functions, which eased the creation of the permutation matrices used in the computation of the range of $\mathcal{OG}$ for the $(n=4,k=0)$ adinkra class.  

\bibliographystyle{abbrv}
\bibliography{gadgets}


\end{document}